\newtheorem{corollary}{Corollary}
\newtheorem{definition}{Definition}
\newtheorem{example}{Example}
\newtheorem{lemma}{Lemma}
\newtheorem{notation}{Notation}
\newtheorem{proposition}{Proposition}
\newtheorem{remark}{Remark}
\newtheorem{theorem}{Theorem}
\numberwithin{equation}{section}
\begin{document}
\title[Asymptotic almost automorphy]{Asymptotic almost automorphy \\
for algebras of generalized functions}
\author{Chikh BOUZAR}
\address{ Laboratory of Mathematical Analysis and Applications. Universit%
\'{e} Oran 1, Ahmed Ben Bella, 31000, Oran, Algeria.}
\email{ch.bouzar@gmail.com}
\author{Meryem SLIMANI}
\address{Laboratory of Mathematical Analysis and Applications. Universit\'{e}
Oran 1, Ahmed Ben Bella, 31000, Oran, Algeria.}
\email{meryemslimani@yahoo.com}

\begin{abstract}
Asymptotic almost automorphy is introduced and studied in the context of
some algebras of generalized functions. We also give applications to neutral
difference differential systems in the framework of such generalized
functions.
\end{abstract}

\subjclass[2020]{ 46F10. 34K14. 46F30. }
\keywords{Asymptotic almost automorphy,\ Generalized functions, Neutral
difference differential equations}
\date{}
\maketitle

\section{Introduction}

Bochner S. defined explicitly almost automorphic functions in the papers
\cite{Bochner61}-\cite{Bochner64}, where also some of their basic properties
are given. In \cite{Bochner62} he studied linear difference differential
equations in the framework of such functions. It is well known that the
concept of almost automorphy is strictly more general than the almost
periodicity of H.\ Bohr \cite{Bohr}, however the Stepanoff almost
periodicity \cite{Step} and the Levitan almost periodicity \cite{Lev} don't
enter into the Bochner concept. Asymptotic almost periodicity of functions
as a perturbation of almost periodic functions by functions vanishing at
infinity is due to Fr\'{e}chet M. in \cite{Frechet}. Asymptotic almost
automorphy of classical functions is considered in \cite{Ngu}, see also \cite%
{BT-AAAD}. The almost periodicity and the asymptotic almost periodicity of
Sobolev-Schwartz distributions, \cite{Sob} and \cite{Schw}, are respectively
considered by L.\ Schwartz in \cite{Schw} and I.\ Cioranescu in \cite{Cior},
while almost automorphy and asymptotic almost automorphy in the setting of\
these distributions are respectively the subject of the recent works \cite%
{BT-AAD}\ and \cite{BT-AAAD}.

In view of the result \cite{Schw2} on the impossibility\ of the
multiplication of distributions, see \cite{Oberg Book} for more details,
algebras of generalized functions containing spaces of Sobolev-Schwartz type
distributions have been studied, see \cite{Colb}, \cite{Egor}, \cite{AntRad}
and \cite{NPS}. The concepts of almost periodicity and asymptotic almost
periodicity as well as almost automorphy in the context of such algebras of
generalized functions are introduced, studied and applied in the papers \cite%
{BK-APGF}, \cite{BK-APGFDE} \cite{BK-AAPGF}, \cite{BK-AAPGFDE} and \cite%
{BKT-AAGF}. So, the paper first introduces and studies a class of
asymptotically almost automorphic generalized functions, denoted by $%
\mathcal{G}_{aaa}.$ In the sense of multiplication, not only $\mathcal{G}%
_{aaa}$\ is stable under multiplication and it contains the space of
asymptotically almost automorphic distributions of \cite{BT-AAAD}, but
moreover some nonlinear operations are performed within the algebra $%
\mathcal{G}_{aaa}.$ As a by pass result, we give a Seeley type result on
extention of functions in the context of the introduced generalized
functions, this is needed in the proof of a fundamental result on the
uniqueness of decomposition of an asymptotically almost automorphic
generalized function. The papers \cite{BK-AAPGF}\ and \cite{BK-AAPGFDE} can
be considered as consequences of this work. The paper aims also, as in \cite%
{Frechet}, to lift a Frechet existence result of asymptotically almost
automorphic solutions of differential equations to the level of neutral
difference differential systems in the framework of $\mathcal{G}_{aaa}.$

It is worth noting that the meaning of generalized functions is utilized
differently by authors as distributions or ultradistributions, even as
hyperfunctions, but in this work by generalized functions we mean in the
sense of the works \cite{Colb}, \cite{Egor}, \cite{AntRad} and \cite{NPS}.

The paper is organized as follows: section two recalls definitions and some
properties of asymptotically almost automorphic functions and asymptotically
almost automorphic distributions as in \cite{BT-AAAD}. Section three
introduces asymptotically almost automorphic generalized functions and gives
some of their important properties. The study of a Seeley type result on
extensions of generalized functions is given in section four. In section six
non-linear operations on asymptotically almost automorphic generalized
function are studied. The last section is dedicated to linear neutral
difference differential systems in the framework of asymptotically almost
automorphic generalized functions.

\section{Asymptotic almost automorphy of functions and distributions}

Let $\mathcal{C}_{b}$ denotes the space of bounded and continuous
complex-valued functions defined on $\mathbb{R},$ endowed with the norm $%
\left\Vert \cdot \right\Vert _{L^{\infty }\left( \mathbb{R} \right) }$ of
uniform convergence on $\mathbb{R},$ it is well-known that $\left( \mathcal{C%
}_{b},\left\Vert \cdot \right\Vert _{L^{\infty }\left( \mathbb{R} \right)
}\right) $ is a Banach algebra.

A complex-valued function $g~$defined and continuous on $\mathbb{R}$ is
called almost automorphic if for any sequence $\left( s_{m}\right) _{m\in
\mathbb{N}}\subset \mathbb{R},$ one can extract a subsequence $\left(
s_{m_{k}}\right) _{k}$ such that%
\begin{equation*}
\tilde{g}\left( x\right) :=\lim_{k\rightarrow +\infty }g\left(
x+s_{m_{k}}\right) ~~\text{exists for every }x\in \mathbb{R},
\end{equation*}%
and%
\begin{equation*}
\lim_{k\rightarrow +\infty }\tilde{g}\left( x-s_{m_{k}}\right) =g\left(
x\right) \text{~for every }x\in \mathbb{R}.
\end{equation*}%
The space of almost automorphic functions on $\mathbb{R}$ is denoted by $%
\mathcal{C}_{aa}.$

The space $\mathcal{C}_{+,0}$ is the set of all bounded and continuous
complex-valued functions defined on $\mathbb{R}$ and vanishing at $+\infty .$

\begin{definition}
We say that a function $f$ $\in \mathcal{C}_{b}\mathcal{~}$ is
asymptotically almost automorphic, if there exist $g\in \mathcal{C}_{aa}$
and $h\in \mathcal{C}_{+,0}$ such that $f=g+h$ on $\mathbb{J}:=\left[
0,+\infty \right[ .$ The space of asymptotically almost automorphic
functions is denoted by $\mathcal{C}_{aaa}.$
\end{definition}

For a study of \ asymptotically almost automorphic functions and
asymptotically almost automorphic distributions see \cite{BT-AAD} and \cite%
{BT-AAAD} and the references list therein.

\begin{proposition}
The decomposition of an asymptotically almost automorphic function is unique
on $\mathbb{J}.$
\end{proposition}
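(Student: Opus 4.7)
The plan is to reduce uniqueness to the statement that the only almost automorphic function that vanishes at $+\infty$ is the zero function. Suppose $f = g_{1}+h_{1} = g_{2}+h_{2}$ on $\mathbb{J}$, with $g_{1},g_{2}\in \mathcal{C}_{aa}$ and $h_{1},h_{2}\in \mathcal{C}_{+,0}$. Setting $g := g_{1}-g_{2}$ and $h := h_{2}-h_{1}$, we get $g \in \mathcal{C}_{aa}$, $h \in \mathcal{C}_{+,0}$, and $g = h$ on $\mathbb{J}$. In particular $g(x) \to 0$ as $x \to +\infty$. It then suffices to show $g \equiv 0$ on $\mathbb{R}$, for this forces $g_{1}=g_{2}$ everywhere and hence $h_{1}=h_{2}$ on $\mathbb{J}$.

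To prove $g \equiv 0$, I would pick any divergent sequence of reals, for instance $s_{m} = m$, and apply the defining property of almost automorphy to $g$. One extracts a subsequence $(s_{m_{k}})_{k}$ such that $\tilde{g}(x) := \lim_{k\to +\infty} g(x+s_{m_{k}})$ exists for every $x\in\mathbb{R}$, and such that $\lim_{k\to +\infty} \tilde{g}(x-s_{m_{k}}) = g(x)$ for every $x\in\mathbb{R}$. Since $s_{m_{k}} \to +\infty$, for each fixed $x$ we have $x+s_{m_{k}}\in \mathbb{J}$ eventually, and $g(x+s_{m_{k}})=h(x+s_{m_{k}})\to 0$ because $h\in\mathcal{C}_{+,0}$. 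Thus $\tilde{g}\equiv 0$ on $\mathbb{R}$, and the second limit then yields $g(x)=0$ for every $x\in\mathbb{R}$.

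The only potentially delicate point is making sure that the vanishing of $g$ at $+\infty$ (which is all one gets from $g=h$ on $\mathbb{J}$) is enough to conclude $\tilde{g}\equiv 0$; this is immediate here because the translations $s_{m_{k}}$ can be chosen to tend to $+\infty$, so the argument $x+s_{m_{k}}$ of $g$ lies in $\mathbb{J}$ for $k$ large. Once that is observed, the two-step limit structure built into the definition of almost automorphy finishes the proof without any further work.
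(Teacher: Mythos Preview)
Your argument is correct and is the standard proof of this fact. Note that the paper itself does not supply a proof of this proposition: it is stated in the preliminary Section~2 as a known result, with a reference to \cite{BT-AAAD}. The idea you use---reducing to the claim that an almost automorphic function vanishing at $+\infty$ must be identically zero, and proving this via the two-step limit in the definition of almost automorphy applied to a sequence $s_{m}\to+\infty$---is exactly the mechanism the paper later invokes (through citations to \cite{BT-AAAD}, Propositions~3--(5) and~9) when proving the analogous uniqueness statement for generalized functions in Theorem~\ref{decomp-AAAGF}.
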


\begin{notation}
If $f\in \mathcal{C}_{aaa}$ and $f=g+h$ on $\mathbb{J},$ where $g\in
\mathcal{C}_{aa}$ and $h\in \mathcal{C}_{+,0}.~$Due to the uniqueness of the
decomposition of $f,$ the function $g$ is said the principal~term of$~f$ and
the function $h$~the corrective term of$~f,$ we denote them respectively by $%
f_{aa}$ and $f_{cor}.$ The notation $f=\left( f_{aa}+f_{cor}\right) \in
\mathcal{C}_{aaa}$ means that $f_{aa}\in \mathcal{C}_{aa},~f_{cor}\in
\mathcal{C}_{+,0}$ and $f=f_{aa}+f_{cor}$ on $\mathbb{J}.$
\end{notation}

Let $\mathcal{E}\left( \mathbb{I}\right) $ be the algebra of space of smooth
functions on $\mathbb{I=\mathbb{R} }$ or $\mathbb{J},$ and define the space
\begin{equation*}
\mathcal{D}_{L^{p}}\left( \mathbb{I}\right) :=\left\{ \varphi \in \mathcal{E}%
(\mathbb{I}):\forall j\in \mathbb{Z}_{+},\varphi ^{(j)}\in L^{p}\left(
\mathbb{I}\right) \right\} ,p\in \left[ 1,+\infty \right] ,
\end{equation*}%
that we endow with the topology defined by the family of semi-norms
\begin{equation*}
\left\vert \varphi \right\vert _{k,p,\mathbb{I}}\;:=\sum\limits_{j\leq
k}\left\Vert \varphi ^{\left( j\right) }\right\Vert _{L^{p}\left( \mathbb{I}%
\right) }\;,k\in \mathbb{Z}_{+}.
\end{equation*}%
So, $\mathcal{D}_{L^{p}}\left( \mathbb{I}\right) $ is a Fr\'{e}chet
subalgebra of $\mathcal{E}\left( \mathbb{I}\right) .$ Denote $\mathcal{B}%
\left( \mathbb{I}\right) :=\mathcal{D}_{L^{\infty }}\left( \mathbb{I}\right)
.$

\begin{remark}
We have $\lim\limits_{x\underset{>}{\rightarrow }0}\varphi ^{(j)}(0)$ exists
for every $j\in \mathbb{Z} _{+}$ when $\varphi \in \mathcal{D}_{L^{p}}\left(
\mathbb{J}\right) .$
\end{remark}

The space of smooth almost automorphic functions $\mathcal{B}_{aa}$\ and
smooth asymptotically almost automorphic functions $\mathcal{B}_{aaa}$ are
defined respectively by%
\begin{equation*}
\mathcal{B}_{aa}:=\left\{ \varphi \in \mathcal{E}\left( \mathbb{R} \right)
:\forall j\in \mathbb{Z}_{+},~\varphi ^{\left( j\right) }\in \mathcal{C}%
_{aa}\right\} .
\end{equation*}%
\begin{equation*}
\mathcal{B}_{aaa}:=\left\{ \varphi \in \mathcal{E}\left( \mathbb{R} \right)
:~\forall j\in \mathbb{Z}_{+},~\varphi ^{\left( j\right) }\in \mathcal{C}%
_{aaa}\right\} .
\end{equation*}%
We endow $\mathcal{B}_{aa}$ and $\mathcal{B}_{aaa}$ with the topology
induced by $\mathcal{B}:=\mathcal{D}_{L^{\infty }}\left( \mathbb{R}\right) .$

\begin{proposition}
\label{prop1.1}

\begin{enumerate}
\item The space $\mathcal{B}_{aaa}~$ is a Fr\'{e}chet subalgebra of $%
\mathcal{B}$ stable by translation.

\item $\mathcal{B}_{aaa}\times \mathcal{B}_{aa}\subset \mathcal{B}_{aaa}.$

\item $\mathcal{B}_{aaa}\ast L^{1}\subset \mathcal{B}_{aaa}.$
\end{enumerate}
\end{proposition}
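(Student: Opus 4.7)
The plan is to reduce all three assertions, via the Leibniz rule or via the commutation of convolution with differentiation, to the corresponding properties of $\mathcal{C}_{aaa}$, $\mathcal{C}_{aa}$ and $\mathcal{C}_{+,0}$, exploiting that membership in $\mathcal{B}_{aaa}$ is a condition on every derivative. Translation stability in (1) is immediate since $(\tau_{a}\varphi)^{(j)}=\tau_{a}(\varphi^{(j)})$ and $\mathcal{C}_{aaa}$ is itself translation invariant. For the algebra property, if $\varphi,\psi\in\mathcal{B}_{aaa}$, Leibniz gives $(\varphi\psi)^{(n)}=\sum_{j\le n}\binom{n}{j}\varphi^{(j)}\psi^{(n-j)}$, which is a finite sum of products of elements of $\mathcal{C}_{aaa}$ and therefore lies in $\mathcal{C}_{aaa}$ by the algebra structure of the latter. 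To obtain the Fr\'{e}chet property I would check that $\mathcal{B}_{aaa}$ is closed in $\mathcal{B}$: a sequence $\varphi_{n}\to\varphi$ in $\mathcal{B}$ converges uniformly in every derivative, and since $\mathcal{C}_{aaa}$ is a closed subspace of $(\mathcal{C}_{b},\|\cdot\|_{L^{\infty}})$, each limit $\varphi^{(j)}$ stays in $\mathcal{C}_{aaa}$.

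For (2) the same Leibniz computation applies, this time using the pointwise closure $\mathcal{C}_{aaa}\cdot\mathcal{C}_{aa}\subset\mathcal{C}_{aaa}$. This auxiliary inclusion follows by writing $f=f_{aa}+f_{cor}$ on $\mathbb{J}$ and checking, for $k\in\mathcal{C}_{aa}$, that $f_{aa}k\in\mathcal{C}_{aa}$ (algebra structure of $\mathcal{C}_{aa}$) and $f_{cor}k\in\mathcal{C}_{+,0}$ (because $k$ is bounded), so $fk=f_{aa}k+f_{cor}k$ is a valid $aaa$-decomposition on $\mathbb{J}$.

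For (3) I would use $(\varphi*f)^{(j)}=\varphi^{(j)}*f$ to reduce the claim to $\mathcal{C}_{aaa}*L^{1}\subset\mathcal{C}_{aaa}$. The genuine obstacle I anticipate here is that the decomposition of an element of $\mathcal{C}_{aaa}$ is given only on $\mathbb{J}$, whereas convolution with an $L^{1}$ kernel involves values on all of $\mathbb{R}$. I would resolve this by extending the corrective term: set $\tilde{h}:=\varphi^{(j)}-g$ on all of $\mathbb{R}$, which is bounded and continuous, agrees with the original corrective term on $\mathbb{J}$, and therefore vanishes at $+\infty$, so that $\tilde{h}\in\mathcal{C}_{+,0}$. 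Then $\varphi^{(j)}*f=g*f+\tilde{h}*f$ on $\mathbb{R}$, and one concludes using the standard inclusions $\mathcal{C}_{aa}*L^{1}\subset\mathcal{C}_{aa}$ and $\mathcal{C}_{+,0}*L^{1}\subset\mathcal{C}_{+,0}$, the latter being a dominated-convergence argument from $f\in L^{1}$ and $\tilde{h}(x)\to 0$ as $x\to+\infty$.
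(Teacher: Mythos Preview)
The paper does not actually give a proof of this proposition: it is stated in Section~2 as a recall of known facts about $\mathcal{B}_{aaa}$, with the reader referred to \cite{BT-AAD} and \cite{BT-AAAD}. So there is no ``paper's own proof'' to compare against.

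Your argument is correct and follows what one would expect from the cited references. A few remarks. In~(1), the closedness of $\mathcal{B}_{aaa}$ in $\mathcal{B}$ does rest on $\mathcal{C}_{aaa}$ being closed in $(\mathcal{C}_b,\|\cdot\|_{L^\infty(\mathbb{R})})$; this is true, and the quickest justification uses the inequality $\|f_{aa}\|_{L^\infty(\mathbb{R})}\le\|f\|_{L^\infty(\mathbb{J})}$ (Proposition~3--(5) of \cite{BT-AAAD}), which forces the principal terms of a uniformly Cauchy sequence in $\mathcal{C}_{aaa}$ to be Cauchy in $\mathcal{C}_{aa}$. In~(3), your device of replacing the corrective term by $\tilde h:=\varphi^{(j)}-g$ on all of $\mathbb{R}$ is exactly the right way to sidestep the fact that the decomposition is only given on $\mathbb{J}$; the same trick is implicitly what makes translation invariance in~(1) work for negative shifts. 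The auxiliary inclusions $\mathcal{C}_{aa}\ast L^{1}\subset\mathcal{C}_{aa}$ and $\mathcal{C}_{+,0}\ast L^{1}\subset\mathcal{C}_{+,0}$ that you invoke are standard and appear in the references the paper cites.
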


The space of $L^{p}-$distributions, $p\in \left] 1,+\infty \right] ,$
denoted by $\mathcal{D}_{L^{p}}^{\prime }\left( \mathbb{R} \right) ,~$is the
topological dual of $\mathcal{D}_{L^{q}}\left( \mathbb{R} \right) ,$ where $%
\frac{1}{p}+\frac{1}{q}=1.$ Let $\mathcal{\dot{B}}$\ be the closure in $%
\mathcal{B}~$of the space $\mathcal{D\subset E}\left( \mathbb{R} \right) $
of functions with compact support. The topological dual of $\mathcal{\dot{B}}
$ is denoted by $\mathcal{D}_{L^{1}}^{\prime }\left( \mathbb{R} \right) .$
The space of bounded distributions $\mathcal{D}_{L^{\infty }}^{\prime
}\left( \mathbb{R} \right) $ is denoted by $\mathcal{B}^{\prime }.$

\begin{definition}
The space of almost automorphic distributions, denoted by $\mathcal{B}%
_{aa}^{\prime },$\ is the space of $~T\in \mathcal{B}^{\prime }$ satisfying
one of the following equivalent statements

\begin{enumerate}
\item $T\ast \varphi \in \mathcal{C}_{aa},~\forall \varphi \in \mathcal{D}.$

\item $\exists k\in \mathbb{Z}_{+}$ and $g_{j}\in \mathcal{C}_{aa},$ $0\leq
j\leq k,$ such that $T=\sum\limits_{j=0}^{k}g_{j}^{\left( j\right) }.$
\end{enumerate}
\end{definition}

The space of bounded distributions vanishing at infinity, denoted by $%
\mathcal{B}_{+,0}^{\prime },$ is the space of $~Q\in \mathcal{B}^{\prime }~$
satisfying
\begin{equation*}
\lim\limits_{\omega \rightarrow +\infty }\left\langle \tau _{\omega
}Q,\varphi \right\rangle :=\lim\limits_{\omega \rightarrow +\infty
}\left\langle Q,\tau _{-\omega }\varphi \right\rangle =0,~\forall \varphi
\in \mathcal{D},
\end{equation*}%
where $\tau _{\omega }\varphi \left( \cdot \right) :=\varphi \left( \cdot
+\omega \right) ,\omega \in \mathbb{R} .$

\begin{theorem}
\label{def-AAAD} The space of asymptotically almost automorphic
distributions, denoted by $\mathcal{B}_{aaa}^{\prime },$ is the space of $\
T\in \mathcal{B}^{\prime }~$satisfying one of the following equivalent
statements

\begin{enumerate}
\item $\exists P\in \mathcal{B}_{aa}^{\prime },\exists $ $Q\in \mathcal{B}%
_{+,0}^{\prime }~$such that $T=P+Q$ on $\mathbb{J}.$

\item $T\ast \varphi \in $ $\mathcal{C}_{aaa},~\forall \varphi \in \mathcal{D%
}.$

\item $\exists k\in \mathbb{Z}_{+}$ and $f_{j}\in \mathcal{C}_{aaa},$ $0\leq
j\leq k,$ such that $T=\sum\limits_{j=0}^{k}\ f_{j}^{\left( j\right) }.$

\item $\exists \left( \theta _{m}\right) _{m\in \mathbb{N}}\subset \mathcal{B%
}_{aaa}$ such that $\lim\limits_{m\rightarrow +\infty }\theta _{m}=T$ in $%
\mathcal{B}^{\prime }.$
\end{enumerate}
\end{theorem}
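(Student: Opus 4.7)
The plan is a cyclic chain of implications $(3) \Rightarrow (1) \Rightarrow (2) \Rightarrow (4) \Rightarrow (3)$, three of which are routine manipulations; the closing step $(4) \Rightarrow (3)$ carries the structural content and is the principal obstacle.

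For $(3) \Rightarrow (1)$: by the uniqueness of the decomposition each $f_j \in \mathcal{C}_{aaa}$ splits on $\mathbb{J}$ as $(f_j)_{aa} + (f_j)_{cor}$, and I set $P := \sum_{j=0}^{k} (f_j)_{aa}^{(j)}$ and $Q := \sum_{j=0}^{k} (f_j)_{cor}^{(j)}$, so that $T = P + Q$ on $\mathbb{J}$. The membership $P \in \mathcal{B}_{aa}^{\prime}$ is exactly the characterization given in the second clause of the definition of that space. For $Q \in \mathcal{B}_{+,0}^{\prime}$ one computes $\langle \tau_{\omega} Q, \varphi \rangle = (Q * \check{\varphi})(\omega) = \sum_{j=0}^{k} \bigl((f_j)_{cor} * \check{\varphi}^{(j)}\bigr)(\omega)$, which tends to $0$ as $\omega \to +\infty$ because the convolution of a function vanishing at $+\infty$ with an $L^{1}$ function again vanishes at $+\infty$.

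For $(1) \Rightarrow (2)$: fix $\varphi \in \mathcal{D}$ with $\mathrm{supp}\,\varphi \subset [-M,M]$. For $x \ge M$ the function $y \mapsto \varphi(x-y)$ is supported in $[0,+\infty)$, hence $T = P + Q$ on $\mathbb{J}$ yields $(T*\varphi)(x) = (P*\varphi)(x) + (Q*\varphi)(x)$ there, the first summand lying in $\mathcal{C}_{aa}$ and the second being bounded continuous with $(Q*\varphi)(x) \to 0$ as $x \to +\infty$ by the defining property of $\mathcal{B}_{+,0}^{\prime}$. Absorbing the discrepancy on $[0,M)$ into the corrective part shows $T*\varphi \in \mathcal{C}_{aaa}$. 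For $(2) \Rightarrow (4)$: a mollifier $\rho_n \in \mathcal{D}$ with $\int \rho_n = 1$ gives $\theta_n := T * \rho_n \in \mathcal{E}(\mathbb{R})$ whose derivatives $\theta_n^{(j)} = T * \rho_n^{(j)}$ lie in $\mathcal{C}_{aaa}$ by hypothesis, so $\theta_n \in \mathcal{B}_{aaa}$, and $\theta_n \to T$ in $\mathcal{B}^{\prime}$ is the standard regularization fact.

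The closing step $(4) \Rightarrow (3)$ is the main difficulty. I would first deduce $(4) \Rightarrow (2)$: the boundedness of $(\theta_m)$ in $\mathcal{B}^{\prime}$ together with the boundedness of the family $\{\tau_x \check{\varphi} : x \in \mathbb{R}\} \subset \mathcal{\dot{B}}$ forces, via Banach--Steinhaus, uniform convergence $\theta_m * \varphi \to T * \varphi$ on $\mathbb{R}$; since every $\theta_m * \varphi \in \mathcal{C}_{aaa}$ and the latter space is closed in $\mathcal{C}_b$ under the uniform norm, one obtains $T * \varphi \in \mathcal{C}_{aaa}$. To upgrade $(2)$ to the structural form $(3)$, I would use a parametrix: pick $E \in L^{1}(\mathbb{R})$ with $(1 - D^{2})^{k} E = \delta$ for $k$ exceeding the Schwartz order of $T$ (e.g., an iterated convolution of $\tfrac{1}{2} e^{-|x|}$), so that $E \in W^{k,1}$. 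Approximating $E$ by $\chi_n \in \mathcal{D}$ in the $W^{k,1}$ norm and using the estimate $\|T * \psi\|_{\infty} \le C\,|\psi|_{k,1,\mathbb{R}}$ that falls out of Schwartz's structure theorem for $\mathcal{B}^{\prime}$, one gets $\chi_n * T \to E * T$ uniformly; each $\chi_n * T \in \mathcal{C}_{aaa}$ by $(2)$, so the limit $E * T$ lies in $\mathcal{C}_{aaa}$, and the expansion $T = (1 - D^{2})^{k}(E * T) = \sum_{j=0}^{2k} c_j (E * T)^{(j)}$ delivers the desired representation with $\mathcal{C}_{aaa}$ coefficients.
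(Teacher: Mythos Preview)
The paper does not prove this theorem; it is recalled from \cite{BT-AAAD} as background material, so there is no in-paper proof to compare against. Your cyclic scheme $(3)\Rightarrow(1)\Rightarrow(2)\Rightarrow(4)\Rightarrow(3)$ with the parametrix $(1-D^{2})^{-k}$ for the structural step is the standard route and is essentially what one finds in the cited reference; the three ``routine'' implications are handled correctly.

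One point deserves tightening. In your passage $(4)\Rightarrow(2)$ you place the family $\{\tau_{x}\check{\varphi}:x\in\mathbb{R}\}$ in $\mathcal{\dot{B}}$ and appeal to Banach--Steinhaus to obtain uniform convergence of $\theta_{m}\ast\varphi$. The predual relevant to $\mathcal{B}^{\prime}$ is $\mathcal{D}_{L^{1}}$, not $\mathcal{\dot{B}}$; and while the translates do form a \emph{bounded} subset of $\mathcal{D}_{L^{1}}$, they are not precompact there, so equicontinuity alone (which is what Banach--Steinhaus yields from weak-$\ast$ convergence) does not directly give uniform convergence on that set. The clean fix is to read ``$\theta_{m}\to T$ in $\mathcal{B}^{\prime}$'' in the strong dual topology, i.e.\ uniformly on bounded subsets of $\mathcal{D}_{L^{1}}$, which is the standard convention for $\mathcal{D}_{L^{p}}^{\prime}$ spaces; then uniform convergence of $\theta_{m}\ast\varphi\to T\ast\varphi$ on $\mathbb{R}$ is immediate and no Banach--Steinhaus is needed. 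With that adjustment the argument is complete.
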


\begin{notation}
If $T\in \mathcal{B}_{aaa}^{\prime }$ and $T=P+Q$ on $\mathbb{J},$ since
this decomposition is unique by (\cite{BT-AAAD}, Proposition $12$), the
distribution $P~$is called the principal~term of $\ T,$ the distribution$~Q$%
~is called the corrective term of $T,$ we denote them respectively $T_{aa}$
and $T_{cor}.$ This is summarized by the notation $T=\left(
T_{aa}+T_{cor}\right) \in \mathcal{B}_{aaa}^{\prime }.$
\end{notation}

\section{Asymptotically almost automorphic generalized functions}

We introduce and study an algebra of asymptotically almost automorphic
generalized functions.

Let $I:=]0,1],(u_{\varepsilon })_{\varepsilon }\in \left( \mathcal{D}%
_{L^{p}}\left( \mathbb{I}\right) \right) ^{I},p\in \left[ 1,+\infty \right]
,m\in \mathbb{Z}$ and $k\in \mathbb{Z}_{+},$ then the notation
\begin{equation*}
\left\vert u_{\varepsilon }\right\vert _{k,p,\mathbb{I}}=O\left( \varepsilon
^{m}\right) ,\varepsilon \rightarrow 0.\Leftrightarrow \exists c>0,\exists
\varepsilon _{0}\in I,~\forall \varepsilon <\varepsilon _{0},~\left\vert
u_{\varepsilon }\right\vert _{k,p,\mathbb{I}}\leq c\varepsilon ^{m}.
\end{equation*}

\begin{definition}
The algebra of asymptotically almost automorphic generalized functions is
denoted and defined as the quotient algebra
\begin{equation*}
\mathcal{G}_{aaa}:=\dfrac{\mathcal{M}_{aaa}}{\mathcal{N}_{aaa}},
\end{equation*}%
where the space of moderate elements is denoted and defined by
\begin{equation*}
\mathcal{M}_{aaa}:=\left\{ \left( u_{\varepsilon }\right) _{\varepsilon }\in
\left( \mathcal{B}_{aaa}\right) ^{I}:\forall k\in \mathbb{Z}_{+},\exists
m\in \mathbb{Z}_{+},\;\left\vert u_{\varepsilon }\right\vert _{k,\infty ,%
\mathbb{R} }=O\left( \varepsilon ^{-m}\right) ,\;\varepsilon \rightarrow
0\right\} ,
\end{equation*}%
and the space of null elements by
\begin{equation*}
\mathcal{N}_{aaa}:=\left\{ \left( u_{\varepsilon }\right) _{\varepsilon }\in
\left( \mathcal{B}_{aaa}\right) ^{I}:\forall k\in \mathbb{Z}_{+},\forall
m\in \mathbb{Z}_{+},\;\left\vert u_{\varepsilon }\right\vert _{k,\infty ,%
\mathbb{R} }=O\left( \varepsilon ^{m}\right) ,\;\varepsilon \rightarrow
0\right\} .
\end{equation*}
\end{definition}

\begin{example}
We have $\mathcal{G}_{aap}\subsetneq \mathcal{G}_{aaa},$ where $\mathcal{G}%
_{aap}$ is the algebra of asymptotically almost periodic generalized
functions of \cite{BK-AAPGF}.
\end{example}

Some properties of $\mathcal{M}_{aaa}$ and $\mathcal{N}_{aaa}$ are given in
the following results.

\begin{proposition}
\label{M-N}

\begin{enumerate}
\item We have the null characterization of $\mathcal{N}_{aaa},$\ i.e.%
\begin{equation*}
\mathcal{N}_{aaa}=\left\{ \left( u_{\varepsilon }\right) _{\varepsilon }\in
\mathcal{M}_{aaa}:\forall m\in \mathbb{Z}_{+},\;\left\vert u_{\varepsilon
}\right\vert _{0,\infty ,%
\mathbb{R}
}=O\left( \varepsilon ^{m}\right) ,\varepsilon \rightarrow 0\right\}
\end{equation*}

\item The space $\mathcal{M}_{aaa}$ is an algebra stable under translation
and derivation.

\item The space $\mathcal{N}_{aaa}$ is an ideal of $\mathcal{M}_{aaa}.$
\end{enumerate}
\end{proposition}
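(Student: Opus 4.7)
The plan is to treat the three items separately, since each relies on standard Colombeau-type arguments adapted to the algebraic framework already set up by Proposition \ref{prop1.1}.

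For item (1), the inclusion from left to right is immediate. For the reverse inclusion, let $(u_{\varepsilon})_{\varepsilon}\in \mathcal{M}_{aaa}$ satisfy $\|u_{\varepsilon}\|_{L^{\infty}(\mathbb{R})}=O(\varepsilon^{m})$ for every $m\in\mathbb{Z}_{+}$; I want to upgrade this to $|u_{\varepsilon}|_{k,\infty,\mathbb{R}}=O(\varepsilon^{m})$ for every $k,m$. I would invoke a Landau--Kolmogorov interpolation inequality
\begin{equation*}
\|\varphi^{(j)}\|_{L^{\infty}(\mathbb{R})}\leq C_{j,k}\,\|\varphi\|_{L^{\infty}(\mathbb{R})}^{1-j/k}\,\|\varphi^{(k)}\|_{L^{\infty}(\mathbb{R})}^{j/k},\qquad 0<j<k,
\end{equation*}
valid for any $\varphi\in\mathcal{B}$, and apply it to $u_{\varepsilon}$. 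By moderateness, $\|u_{\varepsilon}^{(k)}\|_{L^{\infty}(\mathbb{R})}=O(\varepsilon^{-N_{k}})$ for some fixed $N_{k}$ depending on $k$, while the hypothesis gives $\|u_{\varepsilon}\|_{L^{\infty}(\mathbb{R})}^{1-j/k}=O(\varepsilon^{M(1-j/k)})$ for any $M$. Combining the two yields $\|u_{\varepsilon}^{(j)}\|_{L^{\infty}(\mathbb{R})}=O(\varepsilon^{M(1-j/k)-N_{k}j/k})$, and fixing $j$ while letting $M\to+\infty$ (or just choosing $k$ large and $M$ large enough) makes this decay faster than any $\varepsilon^{m}$. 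Summing over $j\leq k$ concludes.

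For item (2), the algebra property follows from Proposition \ref{prop1.1}(1), which tells us $\mathcal{B}_{aaa}$ is a Fr\'echet subalgebra of $\mathcal{B}$, combined with the Leibniz estimate $|u_{\varepsilon}v_{\varepsilon}|_{k,\infty,\mathbb{R}}\leq C_{k}\,|u_{\varepsilon}|_{k,\infty,\mathbb{R}}|v_{\varepsilon}|_{k,\infty,\mathbb{R}}$, so that the product of two moderate nets is moderate. Translation stability is inherited from Proposition \ref{prop1.1}(1) at the level of $\mathcal{B}_{aaa}$, and $L^{\infty}$ semi-norms are translation invariant, so the asymptotic bounds are preserved. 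Derivation stability is similarly clear: if $(u_{\varepsilon})_{\varepsilon}\in\mathcal{M}_{aaa}$, then each $u_{\varepsilon}^{(j)}\in\mathcal{C}_{aaa}$ in particular for $j\geq 1$, so $u_{\varepsilon}'\in\mathcal{B}_{aaa}$, and the obvious inequality $|u_{\varepsilon}'|_{k,\infty,\mathbb{R}}\leq |u_{\varepsilon}|_{k+1,\infty,\mathbb{R}}$ transfers moderateness from order $k+1$ to order $k$.

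For item (3), additivity and the vector space structure of $\mathcal{N}_{aaa}$ are clear from the definition. To see that $\mathcal{N}_{aaa}$ is absorbing under multiplication by $\mathcal{M}_{aaa}$, fix $(u_{\varepsilon})_{\varepsilon}\in\mathcal{M}_{aaa}$ and $(v_{\varepsilon})_{\varepsilon}\in\mathcal{N}_{aaa}$. The Leibniz estimate used in (2) gives $|u_{\varepsilon}v_{\varepsilon}|_{k,\infty,\mathbb{R}}\leq C_{k}\,|u_{\varepsilon}|_{k,\infty,\mathbb{R}}|v_{\varepsilon}|_{k,\infty,\mathbb{R}}$, so with $|u_{\varepsilon}|_{k,\infty,\mathbb{R}}=O(\varepsilon^{-N_{k}})$ and $|v_{\varepsilon}|_{k,\infty,\mathbb{R}}=O(\varepsilon^{M})$ for every $M$, the product is $O(\varepsilon^{M-N_{k}})$, which can be made $O(\varepsilon^{m})$ for every $m$ by choosing $M$ large enough depending on $k$ and $m$.

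The main obstacle is item (1): the Landau--Kolmogorov step is where genuine analysis enters, and one must be a little careful to verify that the constant $C_{j,k}$ is independent of $\varepsilon$ (it is, since it depends only on $j,k$ and the geometry of $\mathbb{R}$) so that the polynomial growth on the top-order derivative is absorbed by the arbitrarily fast decay of $\|u_{\varepsilon}\|_{L^{\infty}(\mathbb{R})}$. Once (1) is established, items (2) and (3) reduce to Leibniz and the properties of $\mathcal{B}_{aaa}$ recorded in Proposition \ref{prop1.1}.
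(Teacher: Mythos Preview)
Your proposal is correct and follows essentially the same route as the paper. For item~(1) both arguments rest on the Landau--Kolmogorov interpolation inequality to trade the $O(\varepsilon^{m})$ decay of $\|u_{\varepsilon}\|_{L^{\infty}}$ against the polynomial growth of a higher derivative; the paper simply fixes the choice $n=2j$ (so the exponents are $1/2$ and $1/2$) rather than keeping $j<k$ general, and items~(2) and~(3) are handled in both cases by the Leibniz-type submultiplicativity $|u_{\varepsilon}v_{\varepsilon}|_{k,\infty,\mathbb{R}}\leq C_{k}|u_{\varepsilon}|_{k,\infty,\mathbb{R}}|v_{\varepsilon}|_{k,\infty,\mathbb{R}}$ together with the algebra and translation properties of $\mathcal{B}_{aaa}$ from Proposition~\ref{prop1.1}.
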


\begin{proof}
\begin{enumerate}
\item The proof is based on the following Landau-Kolmogorov inequality
\begin{equation*}
\left\Vert f^{(p)}\right\Vert _{L^{\infty }\left( \mathbb{%
\mathbb{R}
}\right) }\;\leq 2\pi \left\Vert f\right\Vert _{L^{\infty }\left( \mathbb{%
\mathbb{R}
}\right) }^{1-\frac{p}{n}}\;\left\Vert f^{(n)}\right\Vert _{L^{\infty
}\left( \mathbb{%
\mathbb{R}
}\right) }^{\frac{p}{n}},
\end{equation*}%
where $0<p<n\in \mathbb{%
\mathbb{Z}
}_{+}$ and $f$ is of class $\mathcal{C}^{n}\left(
\mathbb{R}
\right) .$ \newline
Let $\left( u_{\varepsilon }\right) _{\varepsilon }\in \mathcal{M}_{aaa},$
i.e.
\begin{equation}
\forall k\in \mathbb{Z}_{+},~\exists m\in \mathbb{Z}_{+},~\exists
c>0,~\exists \varepsilon _{1}\in I,~\forall \varepsilon <\varepsilon
_{1},~\left\vert u_{\varepsilon }\right\vert _{k,\infty ,%
\mathbb{R}
}\leq c\varepsilon ^{-m},  \label{equ7}
\end{equation}%
and it satisfies the estimate of order zero, i.e.
\begin{equation}
\forall m_{2}\in \mathbb{Z}_{+},~\exists c_{2}>0,~\exists \varepsilon
_{2}\in I,~\forall \varepsilon <\varepsilon _{2},~\left\vert u_{\varepsilon
}\right\vert _{0,\infty ,%
\mathbb{R}
}\leq c_{2}\varepsilon ^{m_{2}}.  \label{equ8}
\end{equation}%
The Landau-Kolmogorov inequality for $p=j,~n=2j,$ (\ref{equ7}) and (\ref%
{equ8}) give $\forall k\in \mathbb{Z}_{+},$
\begin{eqnarray*}
\left\vert u_{\varepsilon }\right\vert _{k,\infty ,%
\mathbb{R}
} &\leq &\sum\limits_{j\leq k}2\pi \left\Vert u_{\varepsilon }\right\Vert
_{L^{\infty }\left(
\mathbb{R}
\right) }^{1-\frac{1}{2}}\;\left\Vert u_{\varepsilon }^{(2j)}\right\Vert
_{L^{\infty }\left(
\mathbb{R}
\right) }^{\frac{1}{2}}, \\
&\leq &2\pi \left( \left\vert u_{\varepsilon }\right\vert _{0,\infty ,%
\mathbb{R}
}\right) ^{\frac{1}{2}}\sum\limits_{l\leq 2k}\left\Vert u_{\varepsilon
}^{(l)}\right\Vert _{L^{\infty }\left(
\mathbb{R}
\right) }^{\frac{1}{2}}, \\
&\leq &2\pi c^{\frac{1}{2}}c_{2}^{\frac{1}{2}}\varepsilon ^{{\frac{-m}{2}}%
}\varepsilon ^{\frac{m_{2}}{2}}.
\end{eqnarray*}%
Taking $m_{2}\in \mathbb{Z}_{+}$ such that $m_{0}=2\left( -\frac{m}{2}+\frac{%
m_{2}}{2}\right) \in \mathbb{Z}_{+},$ then we obtain $\forall k\in \mathbb{Z}%
_{+},~\forall m_{0}\in \mathbb{Z}_{+},~\exists c=2\pi c^{\frac{1}{2}}c_{2}^{%
\frac{1}{2}}>0,~\exists \varepsilon _{0}=\inf \left( \varepsilon
_{1},\varepsilon _{2}\right) \in I,~\forall \varepsilon <\varepsilon
_{0},~\left\vert u_{\varepsilon }\right\vert _{k,\infty ,%
\mathbb{R}
}\leq c\varepsilon ^{m_{0}}.~$Which gives $\left( u_{\varepsilon }\right)
_{\varepsilon }\in \mathcal{N}_{aaa}.$

\item The stability under translation and derivation of the space $\mathcal{M%
}_{aaa}$ is obvious. Let $\left( u_{\varepsilon }\right) _{\varepsilon
},~\left( v_{\varepsilon }\right) _{\varepsilon }\in \mathcal{M}_{aaa},$
i.e. they satisfy (\ref{equ7}) and for $j\in \mathbb{Z}_{+},$
\begin{eqnarray*}
\left\Vert \left( u_{\varepsilon }v_{\varepsilon }\right) ^{(j)}\right\Vert
_{L^{\infty }\left(
\mathbb{R}
\right) }\; &\leq &\sum\limits_{i\leq j}\dfrac{j!}{i!(j-i)!}\left\Vert
u_{\varepsilon }^{(i)}\right\Vert _{L^{\infty }\left(
\mathbb{R}
\right) }\left\Vert v_{\varepsilon }^{(j-i)}\right\Vert _{L^{\infty }\left(
\mathbb{R}
\right) }, \\
&\leq &\left\vert u_{\varepsilon }\right\vert _{j,\infty ,%
\mathbb{R}
}\left\vert v_{\varepsilon }\right\vert _{j,\infty ,%
\mathbb{R}
}\sum\limits_{i\leq j}\dfrac{j!}{i!(j-i)!}, \\
&\leq &2^{j}c_{1}c_{2}\varepsilon ^{-m_{1}}\varepsilon ^{-m_{2}},
\end{eqnarray*}%
consequently, $\forall k\in \mathbb{Z}_{+},~\exists m=\left(
m_{1}+m_{2}\right) \in \mathbb{Z}_{+},~\left\vert u_{\varepsilon
}v_{\varepsilon }\right\vert _{k,\infty ,%
\mathbb{R}
}=O\left( \varepsilon ^{-m}\right) ,\varepsilon \rightarrow 0.$ Which shows
that $\left( u_{\varepsilon }v_{\varepsilon }\right) _{\varepsilon }\in
\mathcal{M}_{aaa}.$

\item Let $(v_{\varepsilon })_{\varepsilon }\in \mathcal{M}_{aaa}$ and $%
\left( u_{\varepsilon }\right) _{\varepsilon }\in \mathcal{N}_{aaa},$ i.e. $%
\left( v_{\varepsilon }\right) _{\varepsilon }$ satisfies (\ref{equ7}) and $%
\left( u_{\varepsilon }\right) _{\varepsilon }$ satisfies
\begin{equation}
\forall k\in \mathbb{Z}_{+},~\forall m^{^{\prime }}\in \mathbb{Z}%
_{+},~\exists c^{^{\prime }}>0,~\exists \varepsilon _{1}^{^{\prime }}\in
I,~\forall \varepsilon <\varepsilon _{1}^{^{\prime }},~\left\vert
u_{\varepsilon }\right\vert _{k,\infty ,%
\mathbb{R}
}\leq c^{^{\prime }}\varepsilon ^{m^{^{\prime }}}.  \label{equ9}
\end{equation}%
Since the family of the norms $\left\vert \cdot \right\vert _{k,\infty ,%
\mathbb{R}
}$ is compatible with the algebraic structure of $\mathcal{B},$ i.e. $%
\forall k\in \mathbb{Z_{+}},~\exists c_{k}>0$ such that
\begin{equation*}
\left\vert u_{\varepsilon }v_{\varepsilon }\right\vert _{k,\infty ,%
\mathbb{R}
}\leq c_{k}\left\vert u_{\varepsilon }\right\vert _{k,\infty ,%
\mathbb{R}
}\left\vert v_{\varepsilon }\right\vert _{k,\infty ,%
\mathbb{R}
}\leq c_{k}cc^{^{\prime }}\varepsilon ^{-m}\varepsilon ^{m^{^{\prime }}}.
\end{equation*}%
Take $m^{\prime }\in \mathbb{Z}_{+}$ such that $m_{0}=\left( -m+m^{^{\prime
}}\right) \in \mathbb{Z}_{+},$ so we obtain $\forall k\in \mathbb{Z}%
_{+},~\forall m_{0}\in \mathbb{Z}_{+},~\exists C=c_{k}cc^{^{\prime
}}>0,~\exists \varepsilon _{0}=\inf \left( \varepsilon _{1},\varepsilon
_{1}^{^{\prime }}\right) \in I,~\forall \varepsilon <\varepsilon
_{0},~\left\vert u_{\varepsilon }v_{\varepsilon }\right\vert _{k,\infty ,%
\mathbb{R}
}\leq C\varepsilon ^{m_{0}},$ which implies $\left( u_{\varepsilon
}v_{\varepsilon }\right) _{\varepsilon }\in \mathcal{N}_{aaa}.$
\end{enumerate}
\end{proof}

Let $\rho \in \mathcal{S}$ such that $\int\limits_{\mathbb{R}}\rho \left(
x\right) dx=1$ and $\int\limits_{\mathbb{R}}x^{k}\rho \left( x\right)
dx=0,\forall k\in \mathbb{N}.$ Set $\rho _{\varepsilon }\left( \cdot \right)
:=\dfrac{1}{\varepsilon }\rho \left( \frac{\cdot }{\varepsilon }\right)
,~\varepsilon >0.$

Define the following maps
\begin{eqnarray*}
&&%
\begin{array}{cccc}
\iota _{aaa}: & \mathcal{B}_{aaa}^{^{\prime }} & \longrightarrow & \mathcal{G%
}_{aaa} \\
& T & \longmapsto & \left( T\ast \rho _{\varepsilon }\right) _{\varepsilon }+%
\mathcal{N}_{aaa}%
\end{array}
\\
&&%
\begin{array}{cccc}
\sigma _{aaa}: & \mathcal{B}_{aaa} & \longrightarrow & \mathcal{G}_{aaa} \\
& f & \longmapsto & \left( f\right) _{\varepsilon }+\mathcal{N}_{aaa}%
\end{array}
\\
&&%
\begin{array}{cccc}
\gamma _{aaa}: & \mathcal{B}_{aaa} & \longrightarrow & \mathcal{B}%
_{aaa}^{\prime } \\
& f & \longmapsto & f%
\end{array}%
\end{eqnarray*}

\begin{proposition}
The following diagram of linear embeddings
\begin{equation*}
\begin{array}{lll}
\mathcal{B}_{aaa} & \overset{\gamma _{aaa}}{\longrightarrow } & \mathcal{B}%
_{aaa}^{\prime } \\
& \sigma _{aaa}\searrow & \text{ }\downarrow \iota _{aaa} \\
&  & \mathcal{G}_{aaa}%
\end{array}%
\end{equation*}%
is commutative.
\end{proposition}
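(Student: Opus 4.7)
The plan is to establish four things: that each of the three maps is well defined, that each is injective, and that the triangle commutes. Linearity is immediate from the linear nature of convolution and inclusion, so I will not dwell on it.

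First I would check the well-definedness. For $\gamma_{aaa}$, given $f\in\mathcal{B}_{aaa}$, all derivatives of $f$ lie in $\mathcal{C}_{aaa}\subset\mathcal{C}_b$, so $f\in\mathcal{B}\subset\mathcal{B}^{\prime}$; and taking $k=0$ in part (3) of Theorem~\ref{def-AAAD} (the structural decomposition) shows $f\in\mathcal{B}_{aaa}^{\prime}$. For $\sigma_{aaa}$, the constant net $(f)_{\varepsilon}$ has $|f|_{k,\infty,\mathbb{R}}$ independent of $\varepsilon$, hence bounded by $c\varepsilon^{0}$, so it lies in $\mathcal{M}_{aaa}$. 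For $\iota_{aaa}$, I would use Theorem~\ref{def-AAAD}(3) to write $T=\sum_{j\leq k}f_{j}^{(j)}$ with $f_{j}\in\mathcal{C}_{aaa}$, so that $T\ast\rho_{\varepsilon}=\sum_{j\leq k} f_{j}\ast\rho_{\varepsilon}^{(j)}$; each term belongs to $\mathcal{B}_{aaa}$ by Proposition~\ref{prop1.1}(3) (since $\rho_{\varepsilon}^{(j)}\in L^{1}$), and the standard scaling $\|\rho_{\varepsilon}^{(j)}\|_{L^{1}}=O(\varepsilon^{-j})$ yields the moderateness estimates $|T\ast\rho_{\varepsilon}|_{\ell,\infty,\mathbb{R}}=O(\varepsilon^{-(k+\ell)})$.

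Next I would verify injectivity. The map $\gamma_{aaa}$ is injective because a continuous function is determined by its action as a distribution. For $\sigma_{aaa}$, if $(f)_{\varepsilon}\in\mathcal{N}_{aaa}$ then $\|f\|_{L^{\infty}(\mathbb{R})}=O(\varepsilon^{m})$ for every $m$, forcing $f\equiv 0$. For $\iota_{aaa}$, if $(T\ast\rho_{\varepsilon})_{\varepsilon}\in\mathcal{N}_{aaa}$ then $T\ast\rho_{\varepsilon}\to 0$ in $L^{\infty}$, hence in $\mathcal{D}^{\prime}$; since $\int\rho=1$ gives $\rho_{\varepsilon}\to\delta$ in $\mathcal{D}^{\prime}$, for every $\varphi\in\mathcal{D}$ we have $\langle T,\varphi\rangle=\lim_{\varepsilon\to 0}\langle T\ast\rho_{\varepsilon},\check{\varphi}(-\cdot)\rangle=0$, so $T=0$.

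Finally, commutativity. I need to show that for $f\in\mathcal{B}_{aaa}$ the net $(f\ast\rho_{\varepsilon}-f)_{\varepsilon}$ lies in $\mathcal{N}_{aaa}$. Since both nets belong to $\mathcal{M}_{aaa}$, the null characterization in Proposition~\ref{M-N}(1) reduces the task to obtaining an order-zero estimate. Here the vanishing-moments hypothesis $\int x^{k}\rho(x)\,dx=0$ for $k\geq 1$ plays the decisive role: for any $N\in\mathbb{Z}_{+}$, after the change of variables $y=\varepsilon z$ one writes
\begin{equation*}
(f\ast\rho_{\varepsilon})(x)-f(x)=\int_{\mathbb{R}}\bigl[f(x-\varepsilon z)-f(x)\bigr]\rho(z)\,dz,
\end{equation*}
Taylor-expands $f(x-\varepsilon z)$ to order $N$, kills the polynomial terms by the moment conditions, and bounds the remainder by $\frac{\varepsilon^{N}}{N!}\|f^{(N)}\|_{L^{\infty}}\int|z|^{N}|\rho(z)|\,dz$, which is $O(\varepsilon^{N})$ uniformly in $x$ because $f\in\mathcal{B}$ and $\rho\in\mathcal{S}$. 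Since $N$ is arbitrary, the zeroth-order seminorm is $O(\varepsilon^{m})$ for every $m$, and Proposition~\ref{M-N}(1) concludes. I expect this last Taylor estimate, and in particular the bookkeeping to ensure that the integral of $|z|^N|\rho(z)|$ is finite (handled by $\rho\in\mathcal{S}$), to be the only nontrivial step; everything else is diagram chasing and routine verification.
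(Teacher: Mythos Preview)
Your proposal is correct and follows essentially the same route as the paper: the structural decomposition $T=\sum f_j^{(j)}$ for moderateness of $\iota_{aaa}$, regularization for its injectivity, and Taylor expansion combined with the vanishing-moment conditions on $\rho$ for commutativity. The only notable difference is that you invoke the null characterization (Proposition~\ref{M-N}(1)) to reduce the commutativity step to an order-zero estimate, whereas the paper carries out the Taylor bound for every derivative $f^{(j)}\ast\rho_\varepsilon-f^{(j)}$ directly; your shortcut is legitimate and slightly more economical (one small caveat: Proposition~\ref{prop1.1}(3) concerns $\mathcal{B}_{aaa}\ast L^1$, not $\mathcal{C}_{aaa}\ast L^1$, so to conclude $f_j\ast\rho_\varepsilon^{(j)}\in\mathcal{B}_{aaa}$ you should instead note that all its derivatives are of the form $f_j\ast\rho_\varepsilon^{(j+\ell)}\in\mathcal{C}_{aaa}\ast L^1\subset\mathcal{C}_{aaa}$).
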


\begin{proof}
For $f\in \mathcal{B}_{aaa},$ we have $\gamma _{aaa}\left( f\right) \in
\mathcal{B}_{aaa}\subset \mathcal{C}_{aaa}.$ We conclude from (\cite{BT-AAAD}%
, Example $3-\left( 1\right) $) that $\gamma _{aaa}\left( f\right) \in
\mathcal{B}_{aaa}^{^{\prime }}.$ Let $T\in \mathcal{B}_{aaa}^{^{\prime }},$
by the characterization of an asymptotically almost automorphic distribution
$\exists (f_{i})_{i\leq m}\subset \mathcal{C}_{aaa}$ such that \ $%
T=\sum\limits_{i\leq m}f_{i}^{(i)}.$ If $j\in \mathbb{Z_{+}},$
\begin{eqnarray*}
\left\vert \left( T\ast \rho _{\varepsilon }\right) ^{(j)}(x)\right\vert
&\leq &\sum\limits_{i\leq m}\dfrac{1}{\varepsilon ^{i+j}}\int\limits_{%
\mathbb{R}}\left\vert f_{i}\left( x-\varepsilon y\right) \rho ^{(i+j)}\left(
y\right) \right\vert dy\;, \\
&\leq &\sum\limits_{i\leq m}\dfrac{1}{\varepsilon ^{i+j}}\Vert f_{i}\Vert
_{L^{\infty }\left( \mathbb{R} \right) }\int\limits_{\mathbb{R}}\left\vert
\rho ^{(i+j)}\left( y\right) \right\vert dy,
\end{eqnarray*}%
consequently, there exists $c_{m,j}>0$ such that
\begin{equation*}
\left\Vert \left( T\ast \rho _{\varepsilon }\right) ^{(j)}\right\Vert
_{L^{\infty }\left( \mathbb{R} \right) }\leq \dfrac{c_{m,j}}{\varepsilon
^{m+j}},
\end{equation*}%
hence, $\forall k\in \mathbb{Z} _{+},\exists m_{1}\in \mathbb{Z} _{+},$
\begin{equation*}
\left\vert T\ast \rho _{\varepsilon }\right\vert _{k,\infty ,\mathbb{R}
}=O\left( \varepsilon ^{-m_{1}}\right) ,~\varepsilon \rightarrow 0,
\end{equation*}%
which gives that $\left( T\ast \rho _{\varepsilon }\right) _{\varepsilon
}\in \mathcal{M}_{aaa}.$ The linearity of $\iota _{aaa}$ results from the
fact that the convolution is linear. If $\left( T\ast \rho _{\varepsilon
}\right) _{\varepsilon }\in \mathcal{N}_{aaa},$ then
\begin{equation}
\forall m^{^{\prime }}\in \mathbb{Z}_{+},~\exists c^{^{\prime }}>0,~\exists
\varepsilon _{1}^{^{\prime }}\in I,~\forall \varepsilon <\varepsilon
_{1}^{^{\prime }},~\left\Vert T\ast \rho _{\varepsilon }\right\Vert
_{L^{\infty }\left( \mathbb{R} \right) }\leq c^{^{\prime }}\varepsilon
^{m^{^{\prime }}}.  \label{embedding*}
\end{equation}%
Due to regularization we have
\begin{equation*}
\left\langle T,\psi \right\rangle =\lim\limits_{\varepsilon \rightarrow
0}\int \left( T\ast \rho _{\varepsilon }\right) \left( x\right) \psi \left(
x\right) dx,\psi \in \mathcal{D}_{L^{1}}\left( \mathbb{R} \right) .
\end{equation*}%
From (\ref{embedding*}), it holds $\forall m^{^{\prime }}\in \mathbb{Z}%
_{+},~\exists c^{^{\prime }}>0,~\exists \varepsilon _{1}^{^{\prime }}\in
I,\forall \varepsilon <\varepsilon _{1}^{^{\prime }},$
\begin{equation*}
\left\vert \int (T\ast \rho _{\varepsilon })(x)\psi (x)dx\right\vert \leq
\left\Vert \psi \right\Vert _{L^{1}\left( \mathbb{R} \right) }c^{^{\prime
}}\varepsilon ^{m^{^{\prime }}},
\end{equation*}%
consequently, when $\varepsilon \rightarrow 0$ we obtain $\left\langle
T,\psi \right\rangle =0,~\forall \psi \in \mathcal{D}_{L^{1}}\left( \mathbb{R%
} \right) ,$ therefore $\iota _{aaa}$ is injective. Finally,
\begin{eqnarray*}
\iota _{aaa}\left( T^{(j)}\right) &=&\left( T^{(j)}\ast \rho _{\varepsilon
}\right) _{\varepsilon }+\mathcal{N}_{aaa} \\
&=&\left( T\ast \rho _{\varepsilon }\right) _{\varepsilon }^{(j)}+\mathcal{N}%
_{aaa}=\left( \iota _{aaa}(T)\right) ^{(j)},~\forall j\in \mathbb{Z}_{+},
\end{eqnarray*}%
i.e. the embedding $\iota _{aaa}$ commutes with derivatives.

Let $f\in \mathcal{B}_{aaa},$ we have to show that $\left( f\ast \rho
_{\varepsilon }-f\right) _{\varepsilon }\in \mathcal{N}_{aaa}.$ By Taylor's
formula, for $\theta \in ]0,1[$ and $m\in \mathbb{N},$ we set for $j\in
\mathbb{Z} _{+},$ that $\left( f^{(j)}\ast \rho _{\varepsilon
}-f^{(j)}\right) \left( x\right) $ equals
\begin{equation*}
\int\limits_{\mathbb{R}}\sum\limits_{l=1}^{m-1}\dfrac{\left( -\varepsilon
y\right) ^{l}}{l!}f^{(l+j)}\left( x\right) \rho \left( y\right)
dy+\int\limits_{\mathbb{R} }\dfrac{\left( -\varepsilon y\right) ^{m}}{m!}%
f^{(m+j)}\left( x-\theta \varepsilon y\right) \rho \left( y\right) dy,
\end{equation*}%
and then
\begin{eqnarray*}
\left\Vert f^{(j)}\ast \rho _{\varepsilon }-f^{(j)}\right\Vert _{L^{\infty
}\left( \mathbb{R} \right) }\; &\leq &\;\dfrac{\varepsilon ^{m}}{m!}%
\sup_{x\in \mathbb{R}}\int\limits_{\mathbb{R} }\left\vert \left( -y\right)
^{m}\right\vert \left\vert f^{(m+j)}\left( x-\theta \varepsilon y\right)
\right\vert \left\vert \rho \left( y\right) \right\vert dy, \\
&\leq &\;\dfrac{\varepsilon ^{m}}{m!}\;\left\Vert f^{(m+j)}\right\Vert
_{L^{\infty }\left( \mathbb{R} \right) }\left\Vert y^{m}\rho \right\Vert
_{L^{1}\left( \mathbb{R} \right) }.
\end{eqnarray*}%
Hence, $\forall k\in \mathbb{Z} _{+},\forall m\in \mathbb{N} ,$
\begin{equation*}
\left\vert f\ast \rho _{\varepsilon }-f\right\vert _{k,\infty ,\mathbb{R}
}\leq \;\dfrac{\varepsilon ^{m}}{m!}\;\left\vert f\right\vert _{m+k,\infty ,%
\mathbb{R} }\;\left\Vert y^{m}\rho \right\Vert _{L^{1}\left( \mathbb{R}
\right) },
\end{equation*}%
which means that $\left( f\ast \rho _{\varepsilon }-f\right) _{\varepsilon
}\in \mathcal{N}_{aaa}.$
\end{proof}

Recall the algebra of almost automorphic generalized functions, see \cite%
{BKT-AAGF}, denoted and defined by
\begin{equation*}
\mathcal{G}_{aa}:=\dfrac{\mathcal{M}_{aa}}{\mathcal{N}_{aa}},
\end{equation*}%
where
\begin{equation*}
\mathcal{M}_{aa}:=\left\{ (u_{\varepsilon })_{\varepsilon }\in \left(
\mathcal{B}_{aa}\right) ^{I}:\forall k\in \mathbb{Z}_{+},\exists m\in
\mathbb{Z}_{+},\;\left\vert u_{\varepsilon }\right\vert _{k,\infty ,\mathbb{R%
} }=O\left( \varepsilon ^{-m}\right) ,\;\varepsilon \rightarrow 0\right\} .
\end{equation*}%
\begin{equation*}
\mathcal{N}_{aa}:=\left\{ (u_{\varepsilon })_{\varepsilon }\in (\mathcal{B}%
_{aa})^{I}:\forall k\in \mathbb{Z}_{+},\forall m\in \mathbb{Z}%
_{+},\;\left\vert u_{\varepsilon }\right\vert _{k,\infty ,\mathbb{R}
}=O\left( \varepsilon ^{m}\right) ,\;\varepsilon \rightarrow 0\right\} .
\end{equation*}%
The algebra of $L^{p}$-generalized functions on $\mathbb{I},$ $1\leq p\leq
+\infty ,$ see \cite{G}, is defined as the quotient algebra%
\begin{equation*}
\mathcal{G}_{L^{p}}\left( \mathbb{I}\right) :=\dfrac{\mathcal{M}%
_{L^{p}}\left( \mathbb{I}\right) }{\mathcal{N}_{L^{p}}\left( \mathbb{I}%
\right) },
\end{equation*}%
where
\begin{equation*}
\mathcal{M}_{L^{p}}\left( \mathbb{I}\right) :=\left\{ (u_{\varepsilon
})_{\varepsilon }\in \left( \mathcal{D}_{L^{p}}\left( \mathbb{I}\right)
\right) ^{I}:\forall k\in \mathbb{Z}_{+},\exists m>0,\;\left\vert
u_{\varepsilon }\right\vert _{k,p,\mathbb{I}}=O\left( \varepsilon
^{-m}\right) ,\;\varepsilon \rightarrow 0\right\} .
\end{equation*}%
\begin{equation*}
\mathcal{N}_{L^{p}}\left( \mathbb{I}\right) :=\left\{ (u_{\varepsilon
})_{\varepsilon }\in \left( \mathcal{D}_{L^{p}}\left( \mathbb{I}\right)
\right) ^{I}:\forall k\in \mathbb{Z}_{+},\forall m>0,\;\left\vert
u_{\varepsilon }\right\vert _{k,p,\mathbb{I}}=O\left( \varepsilon
^{m}\right) ,\;\varepsilon \rightarrow 0\right\} .
\end{equation*}

\begin{notation}
Denote $\mathcal{G}_{L^{\infty }}\left( \mathbb{I}\right) :=\mathcal{G}_{%
\mathcal{B}}\left( \mathbb{I}\right) ,\mathcal{G}_{\mathcal{B}}:=\mathcal{G}%
_{\mathcal{B}}\left( \mathbb{R} \right) $ and $\mathcal{G}_{L^{1}}:=\mathcal{%
G}_{L^{1}}\left( \mathbb{R} \right) .$
\end{notation}

For $\omega \in \mathbb{R},$ the translate $\tau _{\omega }\widetilde{u}$ of
$\widetilde{u}=\left[ (u_{\varepsilon })_{\varepsilon }\right] \in \mathcal{G%
}_{\mathcal{B}}$ is defined by
\begin{equation*}
\tau _{\omega }\widetilde{u}:=\left[ \left( \tau _{\omega }u_{\varepsilon
}\right) _{\varepsilon }\right] .
\end{equation*}%
For $j\in \mathbb{\mathbb{Z} }_{+},$ the derivation $\widetilde{u}%
^{^{^{(j)}}}$ of $\widetilde{u}$ is defined by
\begin{equation*}
\widetilde{u}^{^{^{(j)}}}:=\left[ \left( u_{\varepsilon }^{^{(j)}}\right)
_{\varepsilon }\right] .
\end{equation*}%
Let $\widetilde{v}=[(v_{\varepsilon })_{\varepsilon }]\in \mathcal{G}_{%
\mathcal{B}},$ the product $\widetilde{u}\times \widetilde{v}$ is defined by
\begin{equation*}
\widetilde{u}\times \widetilde{v}:=\left[ (u_{\varepsilon }v_{\varepsilon
})_{\varepsilon }\right] .
\end{equation*}%
Let $\widetilde{v}=\left[ (v_{\varepsilon })_{\varepsilon }\right] \in
\mathcal{G}_{L^{1}},$ the convolution $\widetilde{u}\ast \widetilde{v}$ is
defined by
\begin{equation*}
\widetilde{u}\ast \widetilde{v}:=\left[ \left( u_{\varepsilon }\ast
v_{\varepsilon }\right) _{\varepsilon }\right] .
\end{equation*}

The following results lift the results of Proposition \ref{prop1.1} to $%
\mathcal{G}_{aaa}.$

\begin{proposition}
\label{prop3.2}

\begin{enumerate}
\item $\mathcal{G}_{aaa}$ is a subalgebra of $\mathcal{G}_{\mathcal{B}}$
stable under translation and derivation.

\item $\mathcal{G}_{aaa}\times \mathcal{G}_{aa}\subset \mathcal{G}_{aaa}.$

\item $\mathcal{G}_{aaa}\ast \mathcal{G}_{L^{1}}\subset \mathcal{G}_{aaa}.$
\end{enumerate}
\end{proposition}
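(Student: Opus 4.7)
The plan is to lift the function-level inclusions of Proposition \ref{prop1.1} to the quotient algebras, using Proposition \ref{M-N} to handle the moderateness/null bookkeeping. Since the three assertions have parallel structure, I would prove them in order, each time verifying (a) that $\mathcal{B}_{aaa}$-valuedness survives the operation, (b) that moderate representatives produce moderate outputs, and (c) that null representatives produce null outputs, so that the operation descends to the quotient.

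For part (1), I would first note that $\mathcal{B}_{aaa}\subset\mathcal{B}$ by Proposition \ref{prop1.1}(1), hence the inclusion of representatives gives $\mathcal{M}_{aaa}\subset\mathcal{M}_{L^{\infty}}$ and $\mathcal{N}_{aaa}\subset\mathcal{N}_{L^{\infty}}$, so the induced map $\mathcal{G}_{aaa}\to\mathcal{G}_{\mathcal{B}}$ is a well-defined morphism; injectivity comes from the null characterization in Proposition \ref{M-N}(1) combined with the analogous statement for $\mathcal{G}_{\mathcal{B}}$. The algebra structure is immediate from Proposition \ref{M-N}(2)–(3). Stability under translation and derivation reduces to two observations: first, $\tau_\omega$ and $\partial^{j}$ preserve $\mathcal{B}_{aaa}$ (by Proposition \ref{prop1.1}(1) and the definition of $\mathcal{B}_{aaa}$); second, $|\tau_\omega u_\varepsilon|_{k,\infty,\mathbb{R}}=|u_\varepsilon|_{k,\infty,\mathbb{R}}$ and $|u_\varepsilon^{(j)}|_{k,\infty,\mathbb{R}}\le |u_\varepsilon|_{k+j,\infty,\mathbb{R}}$, which preserves both the moderate and the null estimates.

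For part (2), take representatives $(u_\varepsilon)_\varepsilon\in\mathcal{M}_{aaa}$ and $(w_\varepsilon)_\varepsilon\in\mathcal{M}_{aa}$. Proposition \ref{prop1.1}(2) gives $u_\varepsilon w_\varepsilon\in\mathcal{B}_{aaa}$ for each $\varepsilon$, and the Leibniz estimate used in Proposition \ref{M-N}(2) yields $|u_\varepsilon w_\varepsilon|_{k,\infty,\mathbb{R}}\le 2^{k}|u_\varepsilon|_{k,\infty,\mathbb{R}}|w_\varepsilon|_{k,\infty,\mathbb{R}}$, hence moderateness of the product. The same Leibniz inequality shows that if either factor is null (in its respective space), the product is null in $\mathcal{N}_{aaa}$; since $\mathcal{B}_{aa}\subset\mathcal{B}_{aaa}$, the null $\mathcal{B}_{aa}$-factor automatically lives in $\mathcal{B}_{aaa}$. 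This establishes that the product is well-defined on classes and takes its values in $\mathcal{G}_{aaa}$.

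For part (3), take $(u_\varepsilon)_\varepsilon\in\mathcal{M}_{aaa}$ and $(v_\varepsilon)_\varepsilon\in\mathcal{M}_{L^{1}}$. Proposition \ref{prop1.1}(3) gives $u_\varepsilon\ast v_\varepsilon\in\mathcal{B}_{aaa}$. The key estimate is Young's inequality applied termwise:
\begin{equation*}
\bigl\|(u_\varepsilon\ast v_\varepsilon)^{(j)}\bigr\|_{L^{\infty}(\mathbb{R})}=\bigl\|u_\varepsilon^{(j)}\ast v_\varepsilon\bigr\|_{L^{\infty}(\mathbb{R})}\le \bigl\|u_\varepsilon^{(j)}\bigr\|_{L^{\infty}(\mathbb{R})}\,\|v_\varepsilon\|_{L^{1}(\mathbb{R})},
\end{equation*}
whence $|u_\varepsilon\ast v_\varepsilon|_{k,\infty,\mathbb{R}}\le |u_\varepsilon|_{k,\infty,\mathbb{R}}\,|v_\varepsilon|_{0,1,\mathbb{R}}$. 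This single inequality simultaneously gives moderateness and, when either factor is null, pushes the convolution into $\mathcal{N}_{aaa}$, which proves well-definedness on the quotient and the desired inclusion.

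The only step that is not essentially bookkeeping is checking $\mathcal{B}_{aaa}$-valuedness of the outputs — and this is already taken care of by Proposition \ref{prop1.1}. The main obstacle I anticipate is purely clerical: making sure, for parts (2) and (3), that the cross ideal-type conditions ($\mathcal{N}_{aaa}\cdot\mathcal{M}_{aa}\subset\mathcal{N}_{aaa}$, $\mathcal{M}_{aaa}\cdot\mathcal{N}_{aa}\subset\mathcal{N}_{aaa}$, and the convolution analogues) are verified, since $\mathcal{N}_{aa}$ and $\mathcal{N}_{L^{1}}$ are defined relative to different ambient spaces than $\mathcal{N}_{aaa}$; the inclusion $\mathcal{B}_{aa}\subset\mathcal{B}_{aaa}$ and Proposition \ref{prop1.1}(3) make these checks routine, but each must be stated explicitly.
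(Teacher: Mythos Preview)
Your proposal is correct and follows essentially the same route as the paper: it invokes Proposition~\ref{prop1.1} for $\mathcal{B}_{aaa}$-valuedness, the Leibniz bound for the product, and Young's inequality for the convolution, and then checks independence of representatives exactly as the paper does. If anything you are slightly more careful than the paper, since you explicitly address injectivity of $\mathcal{G}_{aaa}\to\mathcal{G}_{\mathcal{B}}$ (needed for ``subalgebra''), which the paper leaves implicit.
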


\begin{proof}
\begin{enumerate}
\item From Proposition \ref{M-N}-$\left( 2\right) ,$ we deduce that $%
\mathcal{G}_{aaa}$ is an algebra stable under translation and derivation.
Let $(u_{\varepsilon })_{\varepsilon }\in \mathcal{M}_{aaa},$ satisfies (\ref%
{equ7}) and as $\mathcal{B}_{aaa}\subset \mathcal{B},$ hence $%
(u_{\varepsilon })_{\varepsilon }\in \mathcal{M}_{\mathcal{B}}.$ If $%
(u_{\varepsilon })_{\varepsilon }\in \mathcal{N}_{aaa},$ in the same way we
prove that $(u_{\varepsilon })_{\varepsilon }\in \mathcal{N}_{\mathcal{B}}.$

\item Let $\widetilde{u}=[(u_{\varepsilon })_{\varepsilon }]\in \mathcal{G}%
_{aaa}$ and $\widetilde{v}=[(v_{\varepsilon })_{\varepsilon }]\in \mathcal{G}%
_{aa}.$ As $(u_{\varepsilon })_{\varepsilon }\in \mathcal{M}_{aaa}$ and $%
(v_{\varepsilon })_{\varepsilon }\in \mathcal{M}_{aa},$ so $(u_{\varepsilon
})_{\varepsilon }$ and $(v_{\varepsilon })_{\varepsilon }$ satisfy the
estimate (\ref{equ7}). In view of Proposition \ref{prop1.1}-$\left( 2\right)
$ it follows that $\forall \varepsilon \in I,~u_{\varepsilon }v_{\varepsilon
}\in \mathcal{B}_{aaa},$ and for every $j\in \mathbb{Z}_{+},$
\begin{equation*}
\Vert \left( u_{\varepsilon }v_{\varepsilon }\right) ^{(j)}\Vert _{L^{\infty
}\left( \mathbb{R} \right) }\leq 2^{j}\left\vert u_{\varepsilon }\right\vert
_{j,\infty ,\mathbb{R} }\left\vert v_{\varepsilon }\right\vert _{j,\infty ,%
\mathbb{R} }\leq 2^{j}c_{1}c_{2}\varepsilon ^{-m_{1}}\varepsilon ^{-m_{2}},
\end{equation*}%
therefore, $\forall k\in \mathbb{Z}_{+},~\exists m=\left( m_{1}+m_{2}\right)
\in \mathbb{Z}_{+},~\left\vert u_{\varepsilon }v_{\varepsilon }\right\vert
_{k,\infty ,\mathbb{R} }=O(\varepsilon ^{-m}),\;\varepsilon \rightarrow 0,$
i.e. $\left( u_{\varepsilon }v_{\varepsilon }\right) _{\varepsilon }\in
\mathcal{M}_{aaa}. $ The product $\widetilde{u}\times \widetilde{v}$ is
independent on the representatives. Indeed, suppose that $(u_{\varepsilon
}^{^{\prime }})_{\varepsilon }\in \mathcal{M}_{aaa}$ and $(v_{\varepsilon
}^{^{\prime }})_{\varepsilon }\in \mathcal{M}_{aa}$ are others
representatives of $\widetilde{u}$ and $\widetilde{v}$ respectively, for $%
j\in \mathbb{Z}_{+},$
\begin{eqnarray*}
\left\Vert \left( u_{\varepsilon }v_{\varepsilon }-u_{\varepsilon
}^{^{\prime }}v_{\varepsilon }^{^{\prime }}\right) ^{(j)}\right\Vert
_{L^{\infty }\left( \mathbb{R} \right) } &=&\left\Vert \left( u_{\varepsilon
}v_{\varepsilon }-u_{\varepsilon }^{^{\prime }}v_{\varepsilon
}+u_{\varepsilon }^{^{\prime }}v_{\varepsilon }-u_{\varepsilon }^{^{\prime
}}v_{\varepsilon }^{^{\prime }}\right) ^{(j)}\right\Vert _{L^{\infty }\left(
\mathbb{R} \right) }, \\
&\leq &\left\Vert \left( (u_{\varepsilon }-u_{\varepsilon }^{^{\prime
}})v_{\varepsilon }\right) ^{(j)}\right\Vert _{L^{\infty }\left( \mathbb{R}
\right) }+\left\Vert \left( u_{\varepsilon }^{^{\prime }}(v_{\varepsilon
}-v_{\varepsilon }^{^{\prime }}\right) ^{(j)}\right\Vert _{L^{\infty }\left(
\mathbb{R} \right) }, \\
&\leq &2^{j}\left( \left\vert u_{\varepsilon }-u_{\varepsilon }^{^{\prime
}}\right\vert _{j,\infty ,\mathbb{R} }\left\vert v_{\varepsilon }\right\vert
_{j,\infty ,\mathbb{R} }+\left\vert u_{\varepsilon }^{^{\prime }}\right\vert
_{j,\infty ,\mathbb{R} }\left\vert v_{\varepsilon }-v_{\varepsilon
}^{^{\prime }}\right\vert _{j,\infty ,\mathbb{R} }\right) ,
\end{eqnarray*}%
since $(u_{\varepsilon }-u_{\varepsilon }^{^{\prime }})_{\varepsilon }\in
\mathcal{N}_{aaa},$ $(v_{\varepsilon }-v_{\varepsilon }^{^{\prime
}})_{\varepsilon }\in \mathcal{N}_{aa},$ then $\forall k\in \mathbb{Z}
_{+},\forall m\in \mathbb{Z}_{+},\left\vert u_{\varepsilon }v_{\varepsilon
}-u_{\varepsilon }^{^{\prime }}v_{\varepsilon }^{^{\prime }}\right\vert
_{k,\infty ,\mathbb{R} }=O\left( \varepsilon ^{m}\right) ,$ as $\varepsilon
\rightarrow 0,$ which implies that $\left( u_{\varepsilon }v_{\varepsilon
}-u_{\varepsilon }^{^{\prime }}v_{\varepsilon }^{^{\prime }}\right)
_{\varepsilon }\in \mathcal{N}_{aaa}.$

\item Let $\widetilde{u}=[(u_{\varepsilon })_{\varepsilon }]\in \mathcal{G}%
_{aaa}$ and $\widetilde{v}=[(v_{\varepsilon })_{\varepsilon }]\in \mathcal{G}%
_{L^{1}},$ so $(u_{\varepsilon })_{\varepsilon }\in \mathcal{M}_{aaa}$ and $%
(v_{\varepsilon })_{\varepsilon }\in \mathcal{M}_{L^{1}},$ that is $%
(u_{\varepsilon })_{\varepsilon }$ satisfies the estimate $\left( \ref{equ7}%
\right) $, \ and $(v_{\varepsilon })_{\varepsilon }$ satisfies
\begin{equation*}
\forall k\in \mathbb{Z}_{+},~\exists m_{1}>0,~\exists c>0,~\exists
\varepsilon _{0}\in I,~\forall \varepsilon <\varepsilon _{0},~\left\vert
v_{\varepsilon }\right\vert _{k,1,\mathbb{R} }\leq c\varepsilon ^{-m_{1}}.
\end{equation*}%
Proposition \ref{prop1.1}-$\left( 3\right) $ gives $\forall \varepsilon \in
I,~u_{\varepsilon }\ast v_{\varepsilon }\in \mathcal{B}_{aaa}.$ Due to Young
inequality we obtain
\begin{equation*}
\Vert (u_{\varepsilon }\ast v_{\varepsilon })^{(j)}\Vert _{L^{\infty }\left(
\mathbb{R} \right) }\;\leq \left\Vert u_{\varepsilon }^{(j)}\right\Vert
_{L^{\infty }\left( \mathbb{R} \right) }\left\Vert v_{\varepsilon
}\right\Vert _{L^{1}\left( \mathbb{R} \right) }.
\end{equation*}%
For every $k\in \mathbb{Z}_{+},$
\begin{equation*}
\left\vert u_{\varepsilon }\ast v_{\varepsilon }\right\vert _{k,\infty ,%
\mathbb{R} }\;\leq \sum\limits_{j\leq k}\left\Vert u_{\varepsilon
}^{(j)}\right\Vert _{L^{\infty }\left( \mathbb{R} \right) }\left\Vert
v_{\varepsilon }\right\Vert _{L^{1}\left( \mathbb{R} \right) } \\
\leq \left\vert u_{\varepsilon }\right\vert _{k,\infty ,\mathbb{R}
}\left\vert v_{\varepsilon }\right\vert _{0,1,\mathbb{R} },
\end{equation*}%
consequently, $\forall k\in \mathbb{Z} _{+},\exists m\in \mathbb{Z}
_{+},\left\vert u_{\varepsilon }\ast v_{\varepsilon }\right\vert _{k,\infty ,%
\mathbb{R} }=O(\varepsilon ^{-m}),\;\varepsilon \rightarrow 0,$ so $%
(u_{\varepsilon }\ast v_{\varepsilon })_{\varepsilon }\in \mathcal{M}_{aaa}.$
The convolution $\widetilde{u}\ast \widetilde{v}$ does not depend on the
representatives. Indeed, suppose that $(u_{\varepsilon }^{^{\prime
}})_{\varepsilon }\in \mathcal{M}_{aaa}$ and $(v_{\varepsilon }^{^{\prime
}})_{\varepsilon }\in \mathcal{M}_{L^{1}}$ are others representatives of $%
\widetilde{u}$ and $\widetilde{v}$ respectively, for every $k\in \mathbb{Z}
_{+},$
\begin{eqnarray*}
\left\vert u_{\varepsilon }\ast v_{\varepsilon }-u_{\varepsilon }^{^{\prime
}}\ast v_{\varepsilon }^{^{\prime }}\right\vert _{k,\infty ,%
\mathbb{R}
\mathbb{R} } &=&\left\vert u_{\varepsilon }\ast v_{\varepsilon
}-u_{\varepsilon }^{^{\prime }}\ast v_{\varepsilon }+u_{\varepsilon
}^{^{\prime }}\ast v_{\varepsilon }-u_{\varepsilon }^{^{\prime }}\ast
v_{\varepsilon }^{^{\prime }}\right\vert _{k,\infty ,\mathbb{R} }, \\
&\leq &\left\vert u_{\varepsilon }-u_{\varepsilon }^{^{\prime }}\right\vert
_{k,\infty ,\mathbb{R} }\left\vert v_{\varepsilon }\right\vert _{0,1,\mathbb{%
R} }+\left\vert u_{\varepsilon }^{^{\prime }}\right\vert _{k,\infty ,\mathbb{%
R} }\left\vert v_{\varepsilon }-v_{\varepsilon }^{^{\prime }}\right\vert
_{0,1,\mathbb{R} },
\end{eqnarray*}%
as $\left( u_{\varepsilon }-u_{\varepsilon }^{^{\prime }}\right)
_{\varepsilon }\in \mathcal{N}_{aaa},$ $\left( v_{\varepsilon
}-v_{\varepsilon }^{^{\prime }}\right) _{\varepsilon }\in \mathcal{N}%
_{L^{1}},$ then $\forall k\in \mathbb{Z} _{+},\forall m\in \mathbb{Z}%
_{+},~\left\vert u_{\varepsilon }\ast v_{\varepsilon }-u_{\varepsilon
}^{^{\prime }}\ast v_{\varepsilon }^{^{\prime }}\right\vert _{k,\infty ,%
\mathbb{R} }=O(\varepsilon ^{m}),$ as $\varepsilon \rightarrow 0,$ which
means that $\left( u_{\varepsilon }\ast v_{\varepsilon }-u_{\varepsilon
}^{^{\prime }}\ast v_{\varepsilon }^{^{\prime }}\right) _{\varepsilon }\in
\mathcal{N}_{aaa}.$
\end{enumerate}
\end{proof}

\section{A generalized Seeley theorem}

We give a result on the extension operators in the context of generalized
functions. It is needed in the proof of the decomposition of an
asymptotically almost automorphic generalized function. Let's first recall a
technical Lemma, see \cite{Seeley}.

\begin{lemma}
\label{Lemma-Seeley} There are two sequences of real numbers $%
\left(a_{l}\right)_{l\in\mathbb{Z}_{+}}$ and $\left(b_{l}\right)_{l\in
\mathbb{Z}_{+}}$ such that

\begin{enumerate}
\item $b_{l}<0,~\forall l\in \mathbb{Z}_{+}.$

\item $\sum\limits_{l=0}^{+\infty }\left\vert a_{l}\right\vert \left\vert
b_{l}\right\vert ^{n}<+\infty ,~\forall n\in \mathbb{Z}_{+}.$

\item $\sum\limits_{l=0}^{+\infty }a_{l}b_{l}^{n}=1,~\forall n\in \mathbb{Z}
_{+}.$

\item $b_{l}\rightarrow -\infty ,~l\rightarrow +\infty .$
\end{enumerate}
\end{lemma}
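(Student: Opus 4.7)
The plan is to fix the sequence $(b_l)$ first so that (1) and (4) come for free, and then to produce $(a_l)$ by an infinite Lagrange-interpolation formula on the nodes $\{b_l\}$ that automatically forces (3); the real work lies in bounding $|a_l|$ sharply enough to secure (2).

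First I take $b_l := -2^l$, which gives (1) and (4) at once. Next I reformulate (3): expanding an arbitrary polynomial $p(z) = \sum_n p_n z^n$ and interchanging summations (legitimate once (2) is available) shows that requiring $\sum_l a_l b_l^n = 1$ for every $n \in \mathbb{Z}_+$ is equivalent to
\[
\sum_{l \geq 0} a_l\, p(b_l) \;=\; p(1) \qquad \text{for every polynomial } p.
\]
So I am constructing a discrete measure $\sum_l a_l \delta_{b_l}$ whose moments reproduce evaluation at $1$, and this is precisely the setting where Lagrange interpolation is natural.

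Accordingly I set
\[
a_l \;:=\; \prod_{j \neq l} \frac{1 - b_j}{b_l - b_j},
\]
the formal limit of the truncated Lagrange coefficients $a_l^{(N)} := \prod_{0 \leq j \leq N,\; j \neq l}(1 - b_j)/(b_l - b_j)$, which already satisfy $\sum_{l \leq N} a_l^{(N)} p(b_l) = p(1)$ whenever $\deg p \leq N$. With $b_j = -2^j$, rewriting the factors for $j > l$ as $(2^{-j} + 1)/(1 - 2^{l-j})$ and using that both $\sum_j 2^{-j}$ and $\sum_{j > l} 2^{l-j}$ are finite, the infinite product converges absolutely.

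The main obstacle is (2). I would split
\[
|a_l| \;\leq\; \left( \prod_{j > l}\frac{1 + 2^j}{2^j - 2^l} \right) \cdot \frac{\prod_{j=0}^{l-1}(1 + 2^j)}{\prod_{j=0}^{l-1}(2^l - 2^j)}.
\]
The first factor is bounded uniformly in $l$, controlled by a constant like $e \cdot \prod_{k \geq 1}(1 - 2^{-k})^{-1}$. For the second, a direct computation yields $\prod_{j=0}^{l-1}(2^l - 2^j) = 2^{l^2}\prod_{k=1}^l(1 - 2^{-k})$, while the numerator is bounded by $2^{l(l+1)/2}$, so that $|a_l| = O(2^{-l(l-1)/2})$. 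This Gaussian-type decay dominates $|b_l|^n = 2^{ln}$ for every fixed $n$, which gives (2). Condition (3) then follows by letting $N \to \infty$ in the Lagrange identity, with tails controlled uniformly by the same super-exponential estimate.
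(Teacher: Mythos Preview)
Your argument is correct. The choice $b_l=-2^l$ together with the Lagrange coefficients $a_l=\prod_{j\neq l}(1-b_j)/(b_l-b_j)$ does the job: your computation $\prod_{j<l}(2^l-2^j)=2^{l^2}\prod_{k=1}^{l}(1-2^{-k})$ and the bound $\prod_{j<l}(1+2^j)\le 2^{l(l+1)/2}$ give $|a_l|=O\!\left(2^{-l(l-1)/2}\right)$, which indeed kills $|b_l|^n=2^{ln}$ for every fixed $n$, so (2) holds. For (3), your limit argument is fine once one notes that the same estimate holds uniformly for the truncated coefficients, since each extra factor $(1+2^j)/(2^j-2^l)$ with $j>l$ exceeds $1$ and hence $|a_l^{(N)}|\le |a_l|$; this yields the uniform tail control needed to pass to the limit in the Lagrange identity.

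As for comparison: the paper does not give a proof of this lemma at all. It simply quotes the statement and refers the reader to Seeley's 1964 note \cite{Seeley}. What you have written is in fact Seeley's original construction (he too takes $b_l=-2^l$ and builds the $a_l$ from the associated interpolation product), so your proposal coincides with the source the authors cite rather than offering an alternative route.
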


Define the space
\begin{equation*}
\mathcal{B}_{+,0}\left( \mathbb{I}\right) :=\left\{ \varphi \in \mathcal{B}(%
\mathbb{I}):\forall j\in \mathbb{Z}_{+},~\lim\limits_{x\rightarrow +\infty
}\varphi ^{(j)}(x)=0\right\} .
\end{equation*}%
The algebra of bounded generalized functions vanishing at infinity on $%
\mathbb{I}$ is defined by
\begin{equation*}
\mathcal{G}_{+,0}\left( \mathbb{I}\right) :=\dfrac{\mathcal{M}_{+,0}\left(
\mathbb{I}\right) }{\mathcal{N}_{+,0}\left( \mathbb{I}\right) },
\end{equation*}%
where
\begin{equation*}
\mathcal{M}_{+,0}\left( \mathbb{I}\right) :=\left\{ (u_{\varepsilon
})_{\varepsilon }\in \left( \mathcal{B}_{+,0}\left( \mathbb{I}\right)
\right) ^{I}:\forall k\in \mathbb{Z}_{+},~\exists m\in \mathbb{Z}%
_{+},\;\left\vert u_{\varepsilon }\right\vert _{k,\infty ,\mathbb{I}%
}=O(\varepsilon ^{-m}),\;\varepsilon \rightarrow 0\right\} .
\end{equation*}%
\begin{equation*}
\mathcal{N}_{+,0}\left( \mathbb{I}\right) :=\left\{ (u_{\varepsilon
})_{\varepsilon }\in \left( \mathcal{B}_{+,0}\left( \mathbb{I}\right)
\right) ^{I}:\forall k\in \mathbb{Z}_{+},~\forall m\in \mathbb{Z}%
_{+},\;\left\vert u_{\varepsilon }\right\vert _{k,\infty ,\mathbb{I}%
}=O(\varepsilon ^{m}),\;\varepsilon \rightarrow 0\right\} .
\end{equation*}

\begin{theorem}
\label{THM2} The linear extension operator $\widetilde{E}:\mathcal{G}_{%
\mathcal{B}}\left( \mathbb{J}\right) \longrightarrow \mathcal{G}_{\mathcal{B}%
}\left( \mathbb{R} \right) ,$ $\widetilde{u}=[(u_{\varepsilon
})_{\varepsilon }]\longmapsto \widetilde{E}\widetilde{u}=\left[ \left(
Eu_{\varepsilon }\right) _{\varepsilon }\right] ,$ where
\begin{equation*}
Eu_{\varepsilon }(x):=\left\{
\begin{array}{ccc}
u_{\varepsilon }\left( x\right) , & \ \text{if}\ \  & x\geq 0, \\
\sum\limits_{l=0}^{+\infty }a_{l}u_{\varepsilon }\left( b_{l}x\right) , &
\text{if} & x<0,%
\end{array}%
\right.
\end{equation*}%
is well defined and we have $\widetilde{E}\widetilde{u}_{\mid \mathbb{J}}=%
\widetilde{u}.$ In particular, $\forall \widetilde{u}\in \mathcal{G}%
_{+,0}\left( \mathbb{J}\right) ,\widetilde{E}\widetilde{u}\in \mathcal{G}%
_{+,0}\left( \mathbb{R} \right) .$
\end{theorem}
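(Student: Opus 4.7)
The plan is to apply the Seeley formula representative-wise and reduce the whole theorem to a single linear seminorm estimate
$$
|Eu_\varepsilon|_{k,\infty,\mathbb{R}} \leq C_k\,|u_\varepsilon|_{k,\infty,\mathbb{J}},
$$
from which moderateness, nullity, and independence of the representative follow formally. First I would differentiate the defining series term by term, obtaining
$$
(Eu_\varepsilon)^{(j)}(x) = \sum_{l=0}^{+\infty} a_l\, b_l^{\,j}\, u_\varepsilon^{(j)}(b_l x), \qquad x<0.
$$
Because Lemma \ref{Lemma-Seeley}(1) forces $b_l x > 0$ and Lemma \ref{Lemma-Seeley}(2) supplies the $l$-summable majorant $|a_l|\,|b_l|^j\,\|u_\varepsilon^{(j)}\|_{L^\infty(\mathbb{J})}$, the series converges uniformly in $x$ on $\mathbb{R}_-$. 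Matching of left and right $j$-th derivatives at $x=0$ is guaranteed by Lemma \ref{Lemma-Seeley}(3), since dominated convergence in $l$ gives $(Eu_\varepsilon)^{(j)}(0^-) = u_\varepsilon^{(j)}(0^+)\sum_l a_l b_l^{\,j} = u_\varepsilon^{(j)}(0^+) = (Eu_\varepsilon)^{(j)}(0^+)$, so $Eu_\varepsilon \in \mathcal{E}(\mathbb{R})$. Taking $L^\infty$-norms and summing over $j \leq k$ then yields the announced estimate with $C_k := \sum_{j \leq k} \max\bigl(1,\sum_l |a_l||b_l|^j\bigr)$, a finite constant by Lemma \ref{Lemma-Seeley}(2).

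The second step is a purely formal transfer. Since the above inequality is linear in $u_\varepsilon$, it sends $\mathcal{M}_{\mathcal{B}}(\mathbb{J})$ into $\mathcal{M}_{\mathcal{B}}(\mathbb{R})$ and $\mathcal{N}_{\mathcal{B}}(\mathbb{J})$ into $\mathcal{N}_{\mathcal{B}}(\mathbb{R})$. The first inclusion makes $\widetilde{E}\widetilde{u}$ well defined as a class in $\mathcal{G}_{\mathcal{B}}(\mathbb{R})$; applied to a difference of representatives, the second gives independence of the choice of representative, so $\widetilde{E}$ is a well-defined linear map. The identity $\widetilde{E}\widetilde{u}_{\mid \mathbb{J}}=\widetilde{u}$ is then immediate from the pointwise identity $Eu_\varepsilon = u_\varepsilon$ on $\mathbb{J}$.

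For the final claim on $\mathcal{G}_{+,0}$, assume each representative $u_\varepsilon$ lies in $\mathcal{B}_{+,0}(\mathbb{J})$. On $\mathbb{J}$ the extension coincides with $u_\varepsilon$, giving $(Eu_\varepsilon)^{(j)}(x)\to 0$ as $x\to +\infty$. For $x\to -\infty$, Lemma \ref{Lemma-Seeley}(4) ensures $b_l x \to +\infty$ for every fixed $l$, so each summand $a_l b_l^{\,j}\,u_\varepsilon^{(j)}(b_l x)$ tends to $0$; the same $l$-summable majorant used above then lets dominated convergence pass the limit through the series, yielding $(Eu_\varepsilon)^{(j)}(x)\to 0$. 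Hence $Eu_\varepsilon \in \mathcal{B}_{+,0}(\mathbb{R})$ for every $\varepsilon$, and because the seminorms defining $\mathcal{M}_{+,0}$ and $\mathcal{N}_{+,0}$ are identical to those defining $\mathcal{M}_{\mathcal{B}}$ and $\mathcal{N}_{\mathcal{B}}$, the transfer of Step 2 applies verbatim. The main subtlety throughout is the interchange of limit and infinite sum, both at $x=0$ and at $x=-\infty$; this is precisely what the quantitative conditions of Lemma \ref{Lemma-Seeley} are designed to handle, and once they are invoked the entire theorem rests on the single linear estimate of Step 1.
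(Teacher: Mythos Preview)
Your proof is correct and follows essentially the same route as the paper: establish the single linear estimate $|Eu_\varepsilon|_{k,\infty,\mathbb{R}}\le C_k|u_\varepsilon|_{k,\infty,\mathbb{J}}$ via Lemma~\ref{Lemma-Seeley}(1)--(3), and then let moderateness, nullity, and representative-independence follow formally. One remark: your argument for $(Eu_\varepsilon)^{(j)}(x)\to 0$ as $x\to -\infty$ is superfluous, since by definition $\mathcal{B}_{+,0}(\mathbb{R})$ only requires vanishing of derivatives at $+\infty$; the paper accordingly checks only the $+\infty$ limit (and your citation of Lemma~\ref{Lemma-Seeley}(4) there should in any case be (1), since for fixed $l$ it is $b_l<0$ that gives $b_lx\to+\infty$).
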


\begin{proof}
Let $\widetilde{u}=[(u_{\varepsilon })_{\varepsilon }]\in \mathcal{G}_{%
\mathcal{B}}\left( \mathbb{J}\right) ,$ and $(u_{\varepsilon })_{\varepsilon
}\in \mathcal{M}_{\mathcal{B}}\left( \mathbb{J}\right) $ be a representative
of $\widetilde{u}.$ So $\forall \varepsilon \in I,~Eu_{\varepsilon }\in
\mathcal{B}\left( \mathbb{R}\right) $ and $Eu_{\varepsilon \mid \mathbb{J}%
}=u_{\varepsilon }.$ Indeed, if $x<0,$ then $b_{l}x>0,~\forall l\in \mathbb{Z%
}_{+}$ in view of \ Lemma \ref{Lemma-Seeley}-$\left( 1\right) .$ Moreover
according to Lemma \ref{Lemma-Seeley}-$\left( 2\right) $ and as $%
u_{\varepsilon }\in \mathcal{B}\left( \mathbb{J}\right) ,\forall \varepsilon
\in I,$ hence $\forall n\in \mathbb{Z}_{+},\forall \varepsilon \in I,\forall
x<0,$
\begin{equation}
\left\vert \left( Eu_{\varepsilon }\right) ^{(n)}(x)\right\vert \leq
\left\Vert u_{\varepsilon }^{(n)}\right\Vert _{L^{\infty }\left( \mathbb{J}%
\right) }\sum_{l=0}^{+\infty }\left\vert a_{l}\right\vert \left\vert
b_{l}\right\vert ^{n}<+\infty ,  \label{equ9.7}
\end{equation}%
consequently, $\forall n\in \mathbb{Z} _{+},$ the series%
\begin{equation*}
\left( Eu_{\varepsilon }\right) ^{(n)}\left( x\right)
=\sum\limits_{l=0}^{+\infty }a_{l}b_{l}^{n}u_{\varepsilon }^{(n)}\left(
b_{l}x\right) ,\varepsilon \in I,
\end{equation*}%
absolutely converge. Furthermore due to Lemma \ref{Lemma-Seeley}-$\left(
3\right) $,
\begin{eqnarray*}
\lim\limits_{\substack{ x\rightarrow 0  \\ <}}\left( Eu_{\varepsilon
}\right) ^{(n)}\left( x\right) &=&\sum\limits_{l=0}^{+\infty
}a_{l}b_{l}^{n}\lim\limits_{\substack{ x\rightarrow 0  \\ <}}u_{\varepsilon
}^{(n)}(b_{l}x), \\
&=&u_{\varepsilon }^{(n)}\left( 0\right) \sum\limits_{l=0}^{+\infty
}a_{l}b_{l}^{n}=u_{\varepsilon }^{(n)}(0),
\end{eqnarray*}%
so $\forall \varepsilon \in I,Eu_{\varepsilon }\in \mathcal{E}\left( \mathbb{%
R}\right) .$ As $\forall \varepsilon \in I,u_{\varepsilon }\in \mathcal{B}%
\left( \mathbb{J}\right) $ and by ($\ref{equ9.7}$), it follows that $\forall
n\in \mathbb{Z}_{+},\forall \varepsilon \in I,\left( Eu_{\varepsilon
}\right) ^{(n)}\in L^{\infty }\left( \mathbb{R}\right) .$ In order to show
that $(Eu_{\varepsilon })_{\varepsilon }\in \mathcal{M}_{\mathcal{B}}\left(
\mathbb{R}
\right) ,$ we prove the following estimate
\begin{equation}
\forall \varepsilon \in I,~\forall k\in \mathbb{Z}_{+},~\exists
C_{k}>0,~\left\vert Eu_{\varepsilon }\right\vert _{k,\infty ,\mathbb{R}}\leq
C_{k}\left\vert u_{\varepsilon }\right\vert _{k,\infty ,\mathbb{J}}.
\label{equ9.8}
\end{equation}%
Indeed, it is clear that
\begin{equation*}
\left\Vert \left( Eu_{\varepsilon }\right) ^{(n)}\right\Vert _{L^{\infty
}\left( \mathbb{J}\right) }=\left\Vert u_{\varepsilon }^{(n)}\right\Vert
_{L^{\infty }(\mathbb{J})},
\end{equation*}%
and the estimate ($\ref{equ9.7}$), \ gives
\begin{equation*}
\left\Vert \left( Eu_{\varepsilon }\right) ^{(n)}\right\Vert _{L^{\infty
}\left( \mathbb{R} \setminus \mathbb{J}\right) }\leq \left\Vert
u_{\varepsilon }^{(n)}\right\Vert _{L^{\infty }\left( \mathbb{J}\right)
}\sum\limits_{l=0}^{+\infty }\left\vert a_{l}\right\vert \left\vert
b_{l}\right\vert ^{n},
\end{equation*}%
therefore%
\begin{equation*}
\left\Vert \left( Eu_{\varepsilon }\right) ^{(n)}\right\Vert _{L^{\infty
}\left( \mathbb{R} \right) }\leq C_{n}\left\Vert u_{\varepsilon
}^{(n)}\right\Vert _{L^{\infty }\left( \mathbb{J}\right) },
\end{equation*}%
where $C_{n}=\max \left( 1,\sum\limits_{l=0}^{+\infty }\left\vert
a_{l}\right\vert \left\vert b_{l}\right\vert ^{n}\right) .$ So $\forall k\in
\mathbb{Z}_{+},~\exists C_{k}:=\sum\limits_{n\leq k}C_{n}<+\infty ,~\forall
\varepsilon \in I,~u_{\varepsilon }\in \mathcal{B}\left( \mathbb{J}\right) ,$
\begin{equation}
\left\vert Eu_{\varepsilon }\right\vert _{k,\infty ,\mathbb{R}}\leq
C_{k}\left\vert u_{\varepsilon }\right\vert _{k,\infty ,\mathbb{J}},
\label{continuity}
\end{equation}%
this implies $(Eu_{\varepsilon })_{\varepsilon }\in \mathcal{M}_{\mathcal{B}%
}\left( \mathbb{R} \right) .~$The definition of $\widetilde{E}\widetilde{u}$
is independent on representatives. Indeed, if $(u_{\varepsilon
})_{\varepsilon }$ and $(v_{\varepsilon })_{\varepsilon }$ are
representatives of $\widetilde{u},$ hence by (\ref{continuity}),
\begin{equation*}
\forall \varepsilon \in I,~\forall k\in \mathbb{Z}_{+},~\exists
C_{k}>0,~\left\vert Eu_{\varepsilon }-Ev_{\varepsilon }\right\vert
_{k,\infty ,\mathbb{R}}\leq C_{k}\left\vert u_{\varepsilon }-v_{\varepsilon
}\right\vert _{k,\infty ,\mathbb{J}}.
\end{equation*}%
As $(u_{\varepsilon }-v_{\varepsilon })_{\varepsilon }\in \mathcal{N}_{%
\mathcal{B}}(\mathbb{J})$ then $\forall k\in \mathbb{Z}_{+},~\forall
m>0,~\left\vert Eu_{\varepsilon }-Ev_{\varepsilon }\right\vert _{k,\infty ,%
\mathbb{R}}=O(\varepsilon ^{m}),~\varepsilon \rightarrow 0,$ which shows
that $\left( Eu_{\varepsilon }-Ev_{\varepsilon }\right) _{\varepsilon }\in
\mathcal{N}_{\mathcal{B}}\left( \mathbb{R} \right) .$

We have $\widetilde{E}\widetilde{u}_{\mid \mathbb{J}}=\widetilde{u}$ in $%
\mathcal{G}_{\mathcal{B}}\left( \mathbb{J}\right) .$ Indeed, as $\widetilde{E%
}\widetilde{u}=\left[ \left( Eu_{\varepsilon }\right) _{\varepsilon }\right]
\in \mathcal{G}_{\mathcal{B}}\left( \mathbb{R}\right) $\ and $\widetilde{u}=%
\left[ \left( u_{\varepsilon }\right) _{\varepsilon }\right] \in \mathcal{G}%
_{\mathcal{B}}\left( \mathbb{J}\right) ,$\ therefore $\widetilde{E}%
\widetilde{u}_{\mid \mathbb{J}}-\widetilde{u}:=\left[ \left( Eu_{\varepsilon
\mid \mathbb{J}}\right) _{\varepsilon }\right] -\widetilde{u}=\left[ \left(
u_{\varepsilon }\right) _{\varepsilon }\right] -\widetilde{u}=\widetilde{u}-%
\widetilde{u}=0$ in $\mathcal{G}_{\mathcal{B}}\left( \mathbb{J}\right) .$

If $\widetilde{u}=\left[ \left( u_{\varepsilon }\right) _{\varepsilon }%
\right] \in \mathcal{G}_{+,0}\left( \mathbb{J}\right) \subset \mathcal{G}_{%
\mathcal{B}}\left( \mathbb{J}\right) ,$ then $\widetilde{E}\widetilde{u}=%
\left[ \left( Eu_{\varepsilon }\right) _{\varepsilon }\right] \in \mathcal{G}%
_{\mathcal{B}}\left( \mathbb{R} \right) .$ So $\forall \varepsilon \in
I,Eu_{\varepsilon }\in \mathcal{B}\left( \mathbb{R} \right) .$ The fact that
$\forall \varepsilon \in I,Eu_{\varepsilon }=u_{\varepsilon }$ on $\mathbb{J}%
,$ and $\forall \varepsilon \in I,u_{\varepsilon }\in \mathcal{B}%
_{+,0}\left( \mathbb{J}\right) $ implies $\lim_{x\rightarrow +\infty
}Eu_{\varepsilon }\left( x\right) =0,$ i.e. $\forall \varepsilon \in
I,Eu_{\varepsilon }\in \mathcal{C}_{+,0}.$ By (\cite{BT-AAAD}, Proposition $%
5-\left( 5\right) $), we obtain that $\forall \varepsilon \in
I,Eu_{\varepsilon }\in \mathcal{C}_{+,0}\cap \mathcal{B}\left( \mathbb{R}
\right) =\mathcal{B}_{+,0}\left( \mathbb{R} \right) ,$ it follows that $%
\widetilde{E}\widetilde{u}\in \mathcal{G}_{+,0}\left( \mathbb{R}\right) .$
\end{proof}

\section{The decomposition}

In this section we show that an asymptotically almost automorphic
generalized function is uniquely decomposed as in the classical case.

\begin{theorem}
\label{decomp-AAAGF} Let $\widetilde{u}\in \mathcal{G}_{aaa}\left( \mathbb{R}
\right) $ then there exist $\widetilde{v}\in \mathcal{G}_{aa}\left( \mathbb{R%
} \right) $ and $\widetilde{w}\in \mathcal{G}_{+,0}\left( \mathbb{R} \right)
$ such that $\widetilde{u}=\widetilde{v}+\widetilde{w}$ on $\mathbb{J},$ \
and the decomposition is unique on $\mathbb{J}.$
\end{theorem}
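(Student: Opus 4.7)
The plan is to construct the decomposition fibrewise on a moderate representative using the scalar decomposition inside $\mathcal{B}_{aaa}$, transfer moderateness to both parts by an almost-automorphy comparison, extend the corrective part from $\mathbb{J}$ to $\mathbb{R}$ using Theorem \ref{THM2}, and then settle uniqueness by propagating a null estimate from $\mathbb{J}$ to $\mathbb{R}$ via the same almost-automorphy argument.

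For existence, I would pick a representative $(u_{\varepsilon })_{\varepsilon }\in \mathcal{M}_{aaa}$ of $\widetilde{u}$. For each $\varepsilon $, since $u_{\varepsilon }\in \mathcal{B}_{aaa}$, every derivative $u_{\varepsilon }^{(j)}$ lies in $\mathcal{C}_{aaa}$, and uniqueness of the scalar decomposition yields $v_{\varepsilon }:=(u_{\varepsilon })_{aa}\in \mathcal{B}_{aa}$ and $h_{\varepsilon }:=(u_{\varepsilon })_{cor}\in \mathcal{B}_{+,0}(\mathbb{J})$ with $v_{\varepsilon }^{(j)}=(u_{\varepsilon }^{(j)})_{aa}$ and $h_{\varepsilon }^{(j)}=(u_{\varepsilon }^{(j)})_{cor}$. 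The key quantitative estimate, uniform in $\varepsilon $ and $j$, is
\[
\|v_{\varepsilon }^{(j)}\|_{L^{\infty }(\mathbb{R})}\leq \|u_{\varepsilon }^{(j)}\|_{L^{\infty }(\mathbb{R})},\qquad \|h_{\varepsilon }^{(j)}\|_{L^{\infty }(\mathbb{J})}\leq 2\,\|u_{\varepsilon }^{(j)}\|_{L^{\infty }(\mathbb{R})}.
\]
I would derive the first inequality by fixing $y\in \mathbb{R}$ and $\varepsilon $, choosing $s_{n}\to +\infty $, and extracting a subsequence $(s_{n_{k}})$ along which $\widetilde{v}_{\varepsilon }^{(j)}(x):=\lim_{k}v_{\varepsilon }^{(j)}(x+s_{n_{k}})$ exists for every $x$. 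Since $y+s_{n_{k}}\in \mathbb{J}$ eventually and $h_{\varepsilon }^{(j)}$ vanishes at $+\infty $, the relation $v_{\varepsilon }^{(j)}=u_{\varepsilon }^{(j)}-h_{\varepsilon }^{(j)}$ on $\mathbb{J}$ gives $|\widetilde{v}_{\varepsilon }^{(j)}(y)|\leq \|u_{\varepsilon }^{(j)}\|_{L^{\infty }(\mathbb{R})}$; then the reverse Bochner limit $\lim_{j}\widetilde{v}_{\varepsilon }^{(j)}(y-s_{n_{k_{j}}})=v_{\varepsilon }^{(j)}(y)$ yields $|v_{\varepsilon }^{(j)}(y)|\leq \|u_{\varepsilon }^{(j)}\|_{L^{\infty }(\mathbb{R})}$, and the corrective bound follows. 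These inequalities promote the moderateness of $(u_{\varepsilon })_{\varepsilon }$ to $(v_{\varepsilon })_{\varepsilon }\in \mathcal{M}_{aa}$ and $(h_{\varepsilon })_{\varepsilon }\in \mathcal{M}_{+,0}(\mathbb{J})$. Setting $\widetilde{v}:=[(v_{\varepsilon })_{\varepsilon }]\in \mathcal{G}_{aa}(\mathbb{R})$ and $\widetilde{w}:=\widetilde{E}[(h_{\varepsilon })_{\varepsilon }]\in \mathcal{G}_{+,0}(\mathbb{R})$ via Theorem \ref{THM2}, one obtains $\widetilde{u}=\widetilde{v}+\widetilde{w}$ on $\mathbb{J}$. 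Independence of the representative is checked by applying the same estimates to differences, which send $\mathcal{N}_{aaa}$ into $\mathcal{N}_{aa}$ and $\mathcal{N}_{+,0}(\mathbb{J})$ respectively.

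For uniqueness, assume $\widetilde{v}_{1}+\widetilde{w}_{1}=\widetilde{v}_{2}+\widetilde{w}_{2}$ on $\mathbb{J}$; put $\widetilde{v}:=\widetilde{v}_{1}-\widetilde{v}_{2}\in \mathcal{G}_{aa}(\mathbb{R})$ and $\widetilde{w}:=\widetilde{w}_{1}-\widetilde{w}_{2}\in \mathcal{G}_{+,0}(\mathbb{R})$, and choose representatives $(v_{\varepsilon })_{\varepsilon }\in \mathcal{M}_{aa}$, $(w_{\varepsilon })_{\varepsilon }\in \mathcal{M}_{+,0}(\mathbb{R})$. Then $|v_{\varepsilon }+w_{\varepsilon }|_{0,\infty ,\mathbb{J}}=O(\varepsilon ^{m})$ for every $m$. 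Running the same almost-automorphy extraction on $v_{\varepsilon }$ with $w_{\varepsilon }$ in the role of $h_{\varepsilon }$, and using that $w_{\varepsilon }(x)\to 0$ as $x\to +\infty $, one gets $\|v_{\varepsilon }\|_{L^{\infty }(\mathbb{R})}=O(\varepsilon ^{m})$ for every $m$. The $\mathcal{N}_{aa}$-analogue of the null characterization in Proposition \ref{M-N}-(1) then forces $(v_{\varepsilon })_{\varepsilon }\in \mathcal{N}_{aa}$, so $\widetilde{v}_{1}=\widetilde{v}_{2}$ in $\mathcal{G}_{aa}(\mathbb{R})$, and consequently $\widetilde{w}_{1}=\widetilde{w}_{2}$ on $\mathbb{J}$.

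The main obstacle I anticipate is keeping uniform-in-$\varepsilon $ quantitative control through the Bochner double-limit extractions: the subsequences depend on $\varepsilon $, on the point $y$, and on the derivative order, and almost automorphy is not metric. What saves the argument is that the resulting bound $\|v_{\varepsilon }^{(j)}\|_{L^{\infty }(\mathbb{R})}\leq \|u_{\varepsilon }^{(j)}\|_{L^{\infty }(\mathbb{R})}$ is entirely intrinsic and independent of those choices; but verifying this carefully, and then reusing it in the uniqueness step to read off the $O(\varepsilon ^{m})$ rate for $v_{\varepsilon }$ on $\mathbb{R}$ from the analogous rate for $v_{\varepsilon }+w_{\varepsilon }$ on $\mathbb{J}$, is the technical heart of the proof.
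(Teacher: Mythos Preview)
Your proposal is correct and follows essentially the same route as the paper: fibrewise decomposition of a representative, the key bound $\|v_{\varepsilon}^{(j)}\|_{L^{\infty}(\mathbb{R})}\le \|u_{\varepsilon}^{(j)}\|_{L^{\infty}(\mathbb{J})}$ to transfer moderateness (which the paper simply cites as \cite{BT-AAAD}, Proposition~3-(5), whereas you re-derive it via the Bochner double limit), extension of the corrective part by Theorem~\ref{THM2}, and an almost-automorphy translation argument for uniqueness. The only genuine variation is in uniqueness: the paper carries the $O(\varepsilon^{m})$ estimate through all derivative orders directly (using \cite{BT-AAAD}, Proposition~9), while you establish it only at order zero and then invoke the null characterization of Proposition~\ref{M-N}-(1); your shortcut is legitimate and slightly more economical.
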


\begin{proof}
Let $\widetilde{u}=[(u_{\varepsilon })_{\varepsilon }]\in \mathcal{G}_{aaa},$
so $\forall \varepsilon \in I,\forall j\in \mathbb{Z}_{+},~u_{\varepsilon
}^{(j)}\in \mathcal{C}_{aaa}.$ Then there exist $v_{\varepsilon ,j}\in
\mathcal{C}_{aa},w_{\varepsilon ,j}\in \mathcal{C}_{+,0},$ such that $%
\forall j\in \mathbb{\mathbb{N} },~u_{\varepsilon }^{(j)}=(v_{\varepsilon
,j}+w_{\varepsilon ,j})\in \mathcal{C}_{aaa}$ on $\mathbb{J},$ and for $j=0,$
$u_{\varepsilon }=v_{\varepsilon }+w_{\varepsilon }$ on $\mathbb{J}.$ By (%
\cite{BT-AAAD}, Proposition $8$), it holds that $\forall j\in \mathbb{%
\mathbb{N} },$ $v_{\varepsilon ,j}=(v_{\varepsilon })^{(j)}$ on $\mathbb{R}$
and $w_{\varepsilon ,j}=(w_{\varepsilon })^{(j)}$ on $\mathbb{J},$ which
gives $v_{\varepsilon }\in \mathcal{B}_{aa}$ and $w_{\varepsilon }\in
\mathcal{B}_{+,0}\left( \mathbb{J}\right) .$ Let's show that $%
(v_{\varepsilon })_{\varepsilon }\in \mathcal{M}_{aa}.$ As $(u_{\varepsilon
})_{\varepsilon }\in \mathcal{M}_{aaa},$ therefore%
\begin{equation}
\forall k\in \mathbb{Z}_{+},~\exists m\in \mathbb{Z}_{+},~\exists
c>0,~\exists \varepsilon _{0}\in I,~\forall \varepsilon <\varepsilon
_{0},~\left\vert u_{\varepsilon }\right\vert _{k,\infty ,\mathbb{R} }\;\leq
c\varepsilon ^{-m}.  \label{u-mod}
\end{equation}%
Due to (\cite{BT-AAAD}, Proposition $3-(5)$), we obtain
\begin{equation}
\forall j\in \mathbb{Z}_{+},\left\Vert v_{\varepsilon }^{(j)}\right\Vert
_{L^{\infty }\left( \mathbb{R} \right) }\leq \left\Vert u_{\varepsilon
}^{(j)}\right\Vert _{L^{\infty }\left( \mathbb{J}\right) },  \label{a1}
\end{equation}%
it follows that
\begin{equation}
\forall k\in \mathbb{Z}_{+},\exists m\in \mathbb{Z}_{+},\exists c>0,\exists
\varepsilon _{0}\in I,\forall \varepsilon <\varepsilon _{0},\left\vert
v_{\varepsilon }\right\vert _{k,\infty ,\mathbb{R} }\leq c\varepsilon ^{-m},
\label{estimae v-mod}
\end{equation}%
this means that $(v_{\varepsilon })_{\varepsilon }\in \mathcal{M}_{aa}.$ If $%
(u_{\varepsilon })_{\varepsilon }\in \mathcal{N}_{aaa}$ then
\begin{equation}
\forall k\in \mathbb{Z}_{+},\forall m\in \mathbb{Z}_{+},\exists c>0,\exists
\varepsilon _{0}\in I,\forall \varepsilon <\varepsilon _{0},\left\vert
u_{\varepsilon }\right\vert _{k,\infty ,\mathbb{R} }\leq c\varepsilon ^{m},
\label{estimae u-null}
\end{equation}%
and from (\ref{a1}) it holds $(v_{\varepsilon })_{\varepsilon }\in \mathcal{N%
}_{aa}.$ Consequently, $\widetilde{v}=[(v_{\varepsilon })_{\varepsilon }]\in
\mathcal{G}_{aa}.$ On the other hand, we have
\begin{equation}
\forall j\in \mathbb{Z}_{+},\left\Vert w_{\varepsilon }^{(j)}\right\Vert
_{L^{\infty }\left( \mathbb{J}\right) }\leq \left\Vert u_{\varepsilon
}^{(j)}\right\Vert _{L^{\infty }\left( \mathbb{J}\right) }+\left\Vert
v_{\varepsilon }^{(j)}\right\Vert _{L^{\infty }\left( \mathbb{J}\right) }.
\label{a2}
\end{equation}%
The estimates (\ref{u-mod}), (\ref{estimae v-mod}) and (\ref{a2}) give
\begin{equation*}
\forall k\in \mathbb{Z}_{+},~\exists m\in \mathbb{Z}_{+},~\exists
c>0,~\exists \varepsilon _{0}\in I,~\forall \varepsilon <\varepsilon
_{0},\left\vert w_{\varepsilon }\right\vert _{k,\infty ,\mathbb{J}}\leq
2c\varepsilon ^{-m},
\end{equation*}%
hence $(w_{\varepsilon })_{\varepsilon }\in \mathcal{M}_{+,0}\left( \mathbb{J%
}\right) .$ If $(u_{\varepsilon })_{\varepsilon }\in \mathcal{N}_{aaa},$
then $(w_{\varepsilon })_{\varepsilon }\in \mathcal{N}_{+,0}\left( \mathbb{J}%
\right) $ follows from (\ref{estimae u-null}) and (\ref{a2}). Thus $%
\widetilde{w}=[(w_{\varepsilon })_{\varepsilon }]\in \mathcal{G}_{+,0}\left(
\mathbb{J}\right) .$ By Theorem \ref{THM2} extending $\widetilde{w}\in
\mathcal{G}_{+,0}\left( \mathbb{J}\right) $ to $\widetilde{E}\widetilde{w}%
\in \mathcal{G}_{+,0}\left( \mathbb{R} \right) $ with $\widetilde{E}%
\widetilde{w}=\widetilde{w}$ on $\mathbb{J}.$\ Finally, $\widetilde{u}=%
\widetilde{v}+\widetilde{w}$ on $\mathbb{J}.$

If $\widetilde{u}\in \mathcal{G}_{aaa}$ has two decompositions, i.e.
\begin{equation*}
\widetilde{u}=\widetilde{v}_{i}+\widetilde{w}_{i}\ \ \text{on}\ \ \mathbb{J}%
,i=1,2,
\end{equation*}%
where $\widetilde{v}_{i}\in \mathcal{G}_{aa}$ and $\widetilde{w}_{i}\in
\mathcal{G}_{+,0}:=\mathcal{G}_{+,0}\left( \mathbb{R} \right) .$ Let $%
(v_{\varepsilon ,i})_{\varepsilon }\in \mathcal{M}_{aa}$ and $%
(w_{\varepsilon ,i})_{\varepsilon }\in \mathcal{M}_{+,0}$ be respectively
representatives of $\widetilde{v}_{i}$ and $\widetilde{w}_{i},i=1,2.$ So $%
\left( v_{\varepsilon ,1}-v_{\varepsilon ,2}\right) _{\varepsilon }+\left(
w_{\varepsilon ,1}-w_{\varepsilon ,2}\right) _{\varepsilon }\in \mathcal{N}_{%
\mathcal{B}}\left( \mathbb{J}\right) ,$ i.e. $\forall k\in \mathbb{Z}%
_{+},\forall m>0,\exists c>0,\exists \varepsilon _{0}\in I,\forall
\varepsilon <\varepsilon _{0},$
\begin{equation}
\left\vert v_{\varepsilon ,1}-v_{\varepsilon ,2}+w_{\varepsilon
,1}-w_{\varepsilon ,2}\right\vert _{k,\infty ,\mathbb{J}}\leq c\varepsilon
^{m}.  \label{unide}
\end{equation}%
Due to (\cite{BT-AAAD}, Proposition $9$) as $\forall \varepsilon \in
I,v_{\varepsilon ,i}\in \mathcal{B}_{aa},i=1,2,$ for any real sequence $%
(s_{m})_{m\in \mathbb{N}},$ such that $s_{m}\rightarrow +\infty $ there
exist $\left( s_{m_{l(\varepsilon )}}\right) _{l}$ a subsequence of $%
(s_{m})_{m\in \mathbb{N}}$ and $g_{\varepsilon ,i},i=1,2,$ such that $%
\forall x\in \mathbb{R},\forall j\in \mathbb{Z}_{+},$
\begin{equation*}
g_{\varepsilon ,i}^{(j)}(x):=\lim\limits_{l\rightarrow +\infty
}v_{\varepsilon ,i}^{(j)}\left( x+s_{m_{l(\varepsilon )}}\right) \ \ \text{%
and }\ \ \lim\limits_{l\rightarrow +\infty }g_{\varepsilon ,i}^{(j)}\left(
x-s_{m_{l(\varepsilon )}}\right) =v_{\varepsilon ,i}^{(j)}(x).
\end{equation*}%
Furthermore, as $\forall \varepsilon \in I,w_{\varepsilon ,i}\in \mathcal{B}%
_{+,0},i=1,2,$
\begin{equation*}
\lim\limits_{l\rightarrow +\infty }w_{\varepsilon ,i}^{(j)}\left(
x+s_{m_{l(\varepsilon )}}\right) =0,\forall x\in \mathbb{R},\forall j\in
\mathbb{Z}_{+}.
\end{equation*}%
By using \ (\ref{unide}) we have $\forall m>0,\exists c>0,\exists
\varepsilon _{0}\in I,\forall \varepsilon <\varepsilon _{0},\forall x\geq
-s_{m_{l(\varepsilon )}},$
\begin{equation*}
\left\vert v_{\varepsilon ,1}^{(j)}\left( x+s_{m_{l(\varepsilon )}}\right)
-v_{\varepsilon ,2}^{(j)}\left( x+s_{m_{l(\varepsilon )}}\right)
+w_{\varepsilon ,1}^{(j)}\left( x+s_{m_{l(\varepsilon )}}\right)
-w_{\varepsilon ,2}^{(j)}\left( x+s_{m_{l(\varepsilon )}}\right) \right\vert
\leq c\varepsilon ^{m},
\end{equation*}%
so when $l\rightarrow +\infty $ we obtain $\forall m>0,\exists c>0,\exists
\varepsilon _{0}\in I,\forall \varepsilon <\varepsilon _{0},\forall x\geq
-s_{m_{l(\varepsilon )}},$
\begin{equation*}
\left\vert g_{\varepsilon ,1}^{(j)}(x)-g_{\varepsilon
,2}^{(j)}(x)\right\vert \leq c\varepsilon ^{m},
\end{equation*}%
by taking the translate $-s_{m_{l(\varepsilon )}}$ and let $l\rightarrow
+\infty $ we get $\forall m>0,\exists c>0,\exists \varepsilon _{0}\in
I,\forall \varepsilon <\varepsilon _{0},\forall x\geq 0,$
\begin{equation*}
\left\vert v_{\varepsilon ,1}^{(j)}(x)-v_{\varepsilon
,2}^{(j)}(x)\right\vert \leq c\varepsilon ^{m}.
\end{equation*}%
By (\cite{BT-AAAD}, Proposition $3-(5)$), \ it follows
\begin{equation}
\forall k\in \mathbb{Z}_{+},\forall m>0,\exists c>0,\exists \varepsilon
_{0}\in I,\forall \varepsilon <\varepsilon _{0},\left\vert v_{\varepsilon
,1}-v_{\varepsilon ,2}\right\vert _{k,\infty ,\mathbb{R}}\leq c\varepsilon
^{m},  \label{t-pr}
\end{equation}%
which shows that $(v_{\varepsilon ,1}-v_{\varepsilon ,2})_{\varepsilon }\in
\mathcal{N}_{\mathcal{B}}\left( \mathbb{R}\right) ,$ so $\widetilde{v}_{1}=%
\widetilde{v}_{2}$ in $\mathcal{G}_{\mathcal{B}}\left( \mathbb{R}\right) .$
From (\ref{unide}) and (\ref{t-pr}) it holds that $\left( w_{\varepsilon
,1}-w_{\varepsilon ,2}\right) _{\varepsilon }\in \mathcal{N}_{\mathcal{B}%
}\left( \mathbb{J}\right) ,$ i.e. $\widetilde{w}_{1}=\widetilde{w}_{2}$ on $%
\mathbb{J}.$
\end{proof}

\begin{notation}
Let $\widetilde{u}\in \mathcal{G}_{aaa}$ and $\widetilde{u}=\widetilde{v}+%
\widetilde{w}$ on $\mathbb{J},$ where $\widetilde{v}\in \mathcal{G}_{aa}$
and $\widetilde{w}\in \mathcal{G}_{+,0},$ then $\widetilde{v}$ and $%
\widetilde{w}$ are called respectively the principal term and the corrective
term of $\widetilde{u}$ and we denote them respectively $\widetilde{u}_{aa}$
and $\widetilde{u}_{cor}.$ Also $\widetilde{u}=\left( \widetilde{u}_{aa}+%
\widetilde{u}_{cor}\right) \in \mathcal{G}_{aaa}$ means that $\widetilde{u}%
_{aa}\in \mathcal{G}_{aa},$ $\widetilde{u}_{cor}\in \mathcal{G}_{+,0}$ and $%
\widetilde{u}=\widetilde{u}_{aa}+\widetilde{u}_{cor}$ on $\mathbb{J}.$
\end{notation}

\section{Non linear operations}

The algebra of tempered generalized functions on $\mathbb{\mathbb{C}}$
denoted by $\mathcal{G}_{\tau }(\mathbb{\mathbb{C}}),$ see \cite{G} and \cite%
{NPS}\ for more details, is the quotient algebra
\begin{equation*}
\mathcal{G}_{\tau }(\mathbb{\ \mathbb{C}}):=\frac{\mathcal{M}_{\tau }(%
\mathbb{\ \mathbb{C}})}{\mathcal{N}_{\tau }(\mathbb{\mathbb{C}})},
\end{equation*}%
where
\begin{equation*}
\mathcal{M}_{\tau }(\mathbb{\ \mathbb{C}}):=\left\{
\begin{array}{c}
\left( f_{\varepsilon }\right) _{\varepsilon }\in \left( \mathcal{E}\left(
\mathbb{R}^{2}\right) \right) ^{I}:\forall j\in \mathbb{Z}_{+}^{2},\exists
m\in \mathbb{Z}_{+}, \\
\sup_{x\in \mathbb{R}^{2}}\left( 1+\left\vert x\right\vert \right)
^{-m}\left\vert f_{\varepsilon }^{(j)}(x)\right\vert =O\left( \varepsilon
^{-m}\right) ,\varepsilon \rightarrow 0%
\end{array}%
\right\} .
\end{equation*}%
\begin{equation*}
\mathcal{N}_{\tau }(\mathbb{\ \mathbb{C}}):=\left\{
\begin{array}{c}
\left( f_{\varepsilon }\right) _{\varepsilon }\in \left( \mathcal{E}\left(
\mathbb{R}^{2}\right) \right) ^{I}:\forall j\in \mathbb{Z}_{+}^{2},\exists
n\in \mathbb{Z}_{+},\forall m\in \mathbb{Z}_{+}, \\
\sup_{x\in \mathbb{R}^{2}}\left( 1+\left\vert x\right\vert \right)
^{-n}\left\vert f_{\varepsilon }^{(j)}(x)\right\vert =O\left( \varepsilon
^{m}\right) ,\varepsilon \rightarrow 0%
\end{array}%
\right\} .
\end{equation*}

\begin{example}
Any polynomial function is a tempered generalized function.
\end{example}

\begin{theorem}
\label{composition} Let $\widetilde{u}=\left[ \left( u_{\varepsilon }\right)
_{\varepsilon }\right] \in \mathcal{G}_{aaa}$ and $\widetilde{F}=\left[
\left( f_{\varepsilon }\right) _{\varepsilon }\right] \in \mathcal{G}_{\tau
}(\mathbb{\mathbb{C} }),$ then
\begin{equation*}
\widetilde{F}\circ \widetilde{u}:=[(f_{\varepsilon }\circ u_{\varepsilon
})_{\varepsilon }]
\end{equation*}%
is a well-defined element of $\mathcal{G}_{aaa}.$ The principal term and the
corrective term of $\widetilde{F}\circ \widetilde{u}$ are respectively $%
\widetilde{F}\left( \widetilde{u}_{aa}\right) $ and $\widetilde{F}\left(
\widetilde{u}_{aa}+\widetilde{u}_{cor}\right) -\widetilde{F}\left(
\widetilde{u}_{aa}\right) ,$ where $\widetilde{u}=\widetilde{u}_{aa}+%
\widetilde{u}_{cor}$ on $\mathbb{J}.$
\end{theorem}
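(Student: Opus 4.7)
The plan is to establish four things in order: (i) that each representative $(f_\varepsilon\circ u_\varepsilon)_\varepsilon$ actually lives in $(\mathcal{B}_{aaa})^I$, so its class makes sense in $\mathcal{G}_{aaa}$; (ii) that it is moderate, i.e.\ in $\mathcal{M}_{aaa}$; (iii) that changing the representatives of either $\widetilde{F}$ or $\widetilde{u}$ produces a null difference, so $\widetilde{F}\circ\widetilde{u}$ is well-defined; and (iv) that the decomposition of Theorem~\ref{decomp-AAAGF} identifies $\widetilde{F}(\widetilde{u}_{aa})$ and $\widetilde{F}(\widetilde{u}_{aa}+\widetilde{u}_{cor})-\widetilde{F}(\widetilde{u}_{aa})$ as the principal and corrective terms.

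For (i) I would use the classical decomposition of $u_\varepsilon\in\mathcal{B}_{aaa}$: for each $\varepsilon$ write $u_\varepsilon=v_\varepsilon+w_\varepsilon$ on $\mathbb{J}$ with $v_\varepsilon\in\mathcal{B}_{aa}$ and $w_\varepsilon\in\mathcal{B}_{+,0}(\mathbb{J})$. Since $f_\varepsilon\in\mathcal{E}(\mathbb{R}^{2})$ is smooth and $v_\varepsilon$ is a bounded smooth almost automorphic function, $f_\varepsilon\circ v_\varepsilon\in\mathcal{B}_{aa}$ (this is the standard fact that an almost automorphic function composed with a continuous map is almost automorphic; Faà di Bruno shows the property propagates to all derivatives using that $\mathcal{B}_{aa}$ is an algebra). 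On $\mathbb{J}$ split
\begin{equation*}
f_\varepsilon(u_\varepsilon)=f_\varepsilon(v_\varepsilon)+\bigl[f_\varepsilon(v_\varepsilon+w_\varepsilon)-f_\varepsilon(v_\varepsilon)\bigr].
\end{equation*}
The second bracket is $\int_{0}^{1}\partial_x f_\varepsilon(v_\varepsilon+\theta w_\varepsilon)\,d\theta\cdot w_\varepsilon$, which belongs to $\mathcal{B}_{+,0}(\mathbb{J})$ because $w_\varepsilon^{(j)}\to 0$ at $+\infty$ for every $j$ and $f_\varepsilon,v_\varepsilon$ together with their derivatives are bounded on the range of $u_\varepsilon$ on $\mathbb{J}$; Faà di Bruno again gives this for all derivatives, so $f_\varepsilon\circ u_\varepsilon\in\mathcal{B}_{aaa}$.

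For (ii), the Faà di Bruno formula expresses $(f_\varepsilon\circ u_\varepsilon)^{(k)}$ as a finite sum of terms of the form $f_\varepsilon^{(j)}(u_\varepsilon)\prod_{i} u_\varepsilon^{(l_i)}$ with $j\le k$ and $\sum l_i\le k$. The moderate estimate for $\widetilde{u}$ yields $\|u_\varepsilon\|_{L^\infty(\mathbb{R})}\le C\varepsilon^{-m_0}$, so the tempered estimate of $\widetilde{F}$ gives
\begin{equation*}
\bigl\|f_\varepsilon^{(j)}(u_\varepsilon)\bigr\|_{L^\infty(\mathbb{R})}\le C_{j}\varepsilon^{-m}(1+C\varepsilon^{-m_0})^{m}=O(\varepsilon^{-N_j}),
\end{equation*}
and combining with $|u_\varepsilon|_{k,\infty,\mathbb{R}}=O(\varepsilon^{-m_k})$ yields a polynomial-in-$\varepsilon^{-1}$ bound on every seminorm $|f_\varepsilon\circ u_\varepsilon|_{k,\infty,\mathbb{R}}$. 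Hence $(f_\varepsilon\circ u_\varepsilon)_\varepsilon\in\mathcal{M}_{aaa}$.

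Step (iii) is where I expect the main technical work. Given other representatives $(u'_\varepsilon)_\varepsilon$, $(f'_\varepsilon)_\varepsilon$ with $(u_\varepsilon-u'_\varepsilon)_\varepsilon\in\mathcal{N}_{aaa}$ and $(f_\varepsilon-f'_\varepsilon)_\varepsilon\in\mathcal{N}_{\tau}(\mathbb{C})$, I insert $f_\varepsilon\circ u'_\varepsilon$ and write
\begin{equation*}
f_\varepsilon\circ u_\varepsilon-f'_\varepsilon\circ u'_\varepsilon=\bigl[f_\varepsilon\circ u_\varepsilon-f_\varepsilon\circ u'_\varepsilon\bigr]+\bigl[f_\varepsilon-f'_\varepsilon\bigr]\circ u'_\varepsilon.
\end{equation*}
The first bracket is handled by the Taylor formula $f_\varepsilon(u_\varepsilon)-f_\varepsilon(u'_\varepsilon)=(u_\varepsilon-u'_\varepsilon)\int_{0}^{1}\partial_x f_\varepsilon(u'_\varepsilon+\theta(u_\varepsilon-u'_\varepsilon))\,d\theta$, combined with Faà di Bruno bounds analogous to (ii); the second bracket uses the $\mathcal{N}_{\tau}$ estimate of $f_\varepsilon-f'_\varepsilon$ composed with the moderate $u'_\varepsilon$. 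Both contributions are $O(\varepsilon^{m})$ for every $m$, giving a null element and using Proposition~\ref{M-N}(1) to promote the zero-order estimate to all orders (together with the moderate bound already at hand). The subtle point is that the polynomial moderate growth of $f_\varepsilon$ and of $u'_\varepsilon$ must be absorbed by the arbitrarily fast vanishing of the null estimates, which is the standard but delicate interplay between $\mathcal{M}$ and $\mathcal{N}$ in the tempered setting.

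Finally, for the principal/corrective identification, $\widetilde{F}(\widetilde{u}_{aa})\in\mathcal{G}_{aa}$ by the same argument applied to $v_\varepsilon$ alone (part of step (i)), and the difference $\widetilde{F}(\widetilde{u}_{aa}+\widetilde{u}_{cor})-\widetilde{F}(\widetilde{u}_{aa})$ lies in $\mathcal{G}_{+,0}$ by the Taylor expansion with remainder used above combined with Theorem~\ref{THM2} if an extension from $\mathbb{J}$ to $\mathbb{R}$ is needed. Uniqueness in Theorem~\ref{decomp-AAAGF} then forces these to be $(\widetilde{F}\circ\widetilde{u})_{aa}$ and $(\widetilde{F}\circ\widetilde{u})_{cor}$ respectively.
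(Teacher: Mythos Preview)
Your proposal is correct and follows essentially the same architecture as the paper's proof: Fa\`a di Bruno for the moderate estimate, the null characterization of Proposition~\ref{M-N}(1) together with a mean-value/Taylor argument for representative independence, and the splitting $\widetilde{F}(\widetilde{u})=\widetilde{F}(\widetilde{u}_{aa})+\bigl(\widetilde{F}(\widetilde{u}_{aa}+\widetilde{u}_{cor})-\widetilde{F}(\widetilde{u}_{aa})\bigr)$ for the decomposition. The only cosmetic difference is in step~(i): the paper obtains $f_\varepsilon\circ u_\varepsilon\in\mathcal{B}_{aaa}$ by invoking directly that $\mathcal{C}_{aaa}$ is stable under composition with continuous maps and is an algebra (so each Fa\`a di Bruno term $f_\varepsilon^{(r)}(u_\varepsilon)\prod u_\varepsilon^{(i)}$ lies in $\mathcal{C}_{aaa}$), whereas you reprove this inline via the explicit $v_\varepsilon+w_\varepsilon$ decomposition and a Taylor remainder---both routes are equivalent.
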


\begin{proof}
Let $(u_{\varepsilon })_{\varepsilon }\in \mathcal{M}_{aaa}$ and $%
(f_{\varepsilon })_{\varepsilon }\in \mathcal{M}_{\tau }\left( \mathbb{%
\mathbb{C} }\right) ,$ by the classical Fa\`{a}~di Bruno formula, we have $%
\forall j\in \mathbb{Z}_{+},$
\begin{equation}
\dfrac{(f_{\varepsilon }\circ u_{\varepsilon })^{(j)}(x)}{j!}=\sum\limits
_{\substack{ l_{1}+2l_{2}+\cdots +jl_{j}=j  \\ r=l_{1}+\cdots +l_{j}}}\dfrac{%
f_{\varepsilon }^{(r)}(u_{\varepsilon }(x))}{l_{1}!\cdots l_{j}!}%
\prod\limits_{i=1}^{j}\left( \dfrac{u_{\varepsilon }^{(i)}(x)}{i!}\right)
^{l_{i}}.  \label{F-B}
\end{equation}%
As $\forall \varepsilon \in I,~\forall j\in \mathbb{Z}_{+},~u_{\varepsilon
}^{(j)}\in \mathcal{C}_{aaa}$ and $f_{\varepsilon }\in \mathcal{E}\left(
\mathbb{R} \right) ,$ it follows by (\cite{BT-AAAD}, Proposition $3-(4)$),
that $f_{\varepsilon }^{(r)}(u_{\varepsilon })\in \mathcal{C}_{aaa},$ and
since $\mathcal{C}_{aaa}$ is an algebra, then $\forall \varepsilon \in
I,~f_{\varepsilon }\circ u_{\varepsilon }\in \mathcal{B}_{aaa}.$ As $%
(u_{\varepsilon })_{\varepsilon }\in \mathcal{M}_{aaa},$ then
\begin{equation*}
\forall k\in \mathbb{Z}_{+},\exists n_{k}\in \mathbb{Z}_{+},\exists
c_{k}>0,\exists \varepsilon _{k}\in I,\forall \varepsilon <\varepsilon
_{k},\left\vert u_{\varepsilon }\right\vert _{k,\infty ,\mathbb{R} }\leq
c_{k}\varepsilon ^{-n_{k}}.
\end{equation*}%
The fact that $(f_{\varepsilon })_{\varepsilon }\in \mathcal{M}_{\tau
}\left( \mathbb{\mathbb{C} }\right) $ gives
\begin{equation*}
\forall j\in \mathbb{Z}_{+},\exists N_{j}\in \mathbb{Z}_{+},\exists
C_{j}>0,\exists \varepsilon _{j}^{^{\prime }}\in I,\forall \varepsilon
<\varepsilon _{j}^{^{\prime }},\left\Vert f_{\varepsilon
}^{(j)}(u_{\varepsilon })\right\Vert _{L^{\infty }\left( \mathbb{R} \right)
}\leq C_{j}\varepsilon ^{-N_{j}}\left\Vert 1+u_{\varepsilon }\right\Vert
_{L^{\infty }\left( \mathbb{R} \right) }^{N_{j}}.
\end{equation*}%
Consequently, by (\ref{F-B})\ we obtain
\begin{equation*}
\dfrac{\left\Vert (f_{\varepsilon }\circ u_{\varepsilon })^{(j)}\right\Vert
_{L^{\infty }\left( \mathbb{R} \right) }}{j!}\leq \sum\limits_{\substack{ %
l_{1}+2l_{2}+\cdots +jl_{j}=j  \\ r=l_{1}+\cdots +l_{j}}}\dfrac{%
C_{r}\varepsilon ^{-N_{r}}\left\Vert 1+u_{\varepsilon }\right\Vert
_{L^{\infty }\left( \mathbb{R} \right) }^{N_{r}}}{l_{1}!\cdots l_{j}!}%
\prod\limits_{i=1}^{j}\left( \dfrac{\left\Vert u_{\varepsilon
}^{(i)}\right\Vert _{L^{\infty }\left( \mathbb{R} \right) }}{i!}\right)
^{l_{i}},
\end{equation*}%
hence there exists $c>0,$
\begin{eqnarray*}
\dfrac{\left\Vert (f_{\varepsilon }\circ u_{\varepsilon })^{(j)}\right\Vert
_{L^{\infty }\left( \mathbb{R} \right) }}{j!} &\leq &\sum\limits_{\substack{ %
l_{1}+2l_{2}+\cdots +jl_{j}=j  \\ r=l_{1}+\cdots +l_{j}}}\dfrac{c\varepsilon
^{-N_{r}(1+n_{0})}}{l_{1}!\cdots l_{j}!}\prod_{i=1}^{j}\left( \dfrac{%
c_{i}\varepsilon ^{-n_{i}}}{i!}\right) ^{l_{i}}, \\
&\leq &\sum\limits_{\substack{ l_{1}+2l_{2}+\cdots +jl_{j}=j  \\ %
r=l_{1}+\cdots +l_{j}}}\dfrac{c\varepsilon
^{-(N_{r}(1+n_{0})+\sum\limits_{i=1}^{j}n_{i}l_{i})}}{l_{1}!\cdots l_{j}!}%
\prod\limits_{i=1}^{j}\left( \dfrac{c_{i}}{i!}\right) ^{l_{i}}, \\
&\leq &C^{^{\prime }}\varepsilon ^{-m},
\end{eqnarray*}%
where
\begin{equation*}
m=\max\limits_{\substack{ _{\substack{ l_{1}+2l_{2}+\cdots +jl_{j}=j  \\ %
r=l_{1}+\cdots +l_{j}}}  \\ 1\leq r\leq j}}\left\{
N_{r}(1+n_{0})+\sum\limits_{i=1}^{j}n_{i}l_{i})\right\} ,C^{^{\prime
}}=\sum\limits_{\substack{ l_{1}+2l_{2}+\cdots +jl_{j}=j  \\ r=l_{1}+\cdots
+l_{j}}}\dfrac{c}{l_{1}!\cdots l_{j}!}\prod\limits_{i=1}^{j}\left( \dfrac{%
c_{i}}{i!}\right) ^{l_{i}}.\newline
\end{equation*}%
Finally, with $C=C^{^{\prime }}\sum\limits_{j\leq k}j!,$ it holds
\begin{equation*}
\forall k\in \mathbb{Z}_{+},\exists m\in \mathbb{Z}_{+},\exists C>0,\exists
\varepsilon ^{"}=\inf_{1\leq i\leq j\leq k}\left( \varepsilon
_{i},\varepsilon _{j}^{^{\prime }}\right) ,\forall \varepsilon <\varepsilon
^{"},\left\vert f_{\varepsilon }\circ u_{\varepsilon }\right\vert _{k,\infty
,\mathbb{R} }\leq C\varepsilon ^{-m},
\end{equation*}%
which means that $(f_{\varepsilon }\circ u_{\varepsilon })_{\varepsilon }\in
\mathcal{M}_{aaa}.$ This composition does not depend on the representatives.
Indeed, suppose that $(v_{\varepsilon })_{\varepsilon }\in \mathcal{M}_{aaa}$
and $(g_{\varepsilon })_{\varepsilon }\in \mathcal{M}_{\tau }\left( \mathbb{%
\mathbb{C} }\right) $ are others representatives of $\widetilde{u}$ and $%
\widetilde{F}$ respectively. Set $(n_{\varepsilon })_{\varepsilon
}:=((v_{\varepsilon })_{\varepsilon }-(u_{\varepsilon })_{\varepsilon })\in
\mathcal{N}_{aaa}$ and $(m_{\varepsilon })_{\varepsilon }:=\left(
(f_{\varepsilon })_{\varepsilon }-(g_{\varepsilon })_{\varepsilon }\right)
\in \mathcal{N}_{\tau }\left( \mathbb{\mathbb{C} }\right) .$ To show that $%
(f_{\varepsilon }\circ u_{\varepsilon }-g_{\varepsilon }\circ v_{\varepsilon
})_{\varepsilon }\in \mathcal{N}_{aaa},$ since $(f_{\varepsilon }\circ
u_{\varepsilon }-g_{\varepsilon }\circ v_{\varepsilon })_{\varepsilon }\in
\mathcal{M}_{aaa},$\ according to Proposition \ref{M-N}$-\left( 1\right) ,$
it is enough to prove that $(f_{\varepsilon }\circ u_{\varepsilon
}-g_{\varepsilon }\circ v_{\varepsilon })_{\varepsilon }$ satisfies (\ref%
{equ8}). Indeed, we have
\begin{eqnarray*}
\left\vert f_{\varepsilon }\circ u_{\varepsilon }-g_{\varepsilon }\circ
v_{\varepsilon }\right\vert _{0,\infty ,\mathbb{R} } &\leq &\left\vert
f_{\varepsilon }\circ u_{\varepsilon }-f_{\varepsilon }\circ v_{\varepsilon
}\right\vert _{0,\infty ,\mathbb{R} }+\left\vert f_{\varepsilon }\circ
v_{\varepsilon }-g_{\varepsilon }\circ v_{\varepsilon }\right\vert
_{0,\infty ,\mathbb{R} }, \\
&=&\left\vert f_{\varepsilon }(u_{\varepsilon })-f_{\varepsilon
}(u_{\varepsilon }+n_{\varepsilon })\right\vert _{0,\infty ,\mathbb{R}
}+\left\vert m_{\varepsilon }(v_{\varepsilon })\right\vert _{0,\infty ,%
\mathbb{R} }, \\
&\leq &\left\vert n_{\varepsilon }\right\vert _{0,\infty ,\mathbb{R}
}\left\vert f_{\varepsilon }^{^{\prime }}(u_{\varepsilon })\right\vert
_{0,\infty ,\mathbb{R} }+\left\vert m_{\varepsilon }(v_{\varepsilon
})\right\vert _{0,\infty ,\mathbb{R} }.
\end{eqnarray*}%
It is clear that $\forall k\in \mathbb{Z}_{+},~\left\vert n_{\varepsilon
}\right\vert _{0,\infty ,\mathbb{R} }\left\vert f_{\varepsilon }^{^{\prime
}}(u_{\varepsilon })\right\vert _{0,\infty ,\mathbb{R} }=O(\varepsilon
^{k}),~\varepsilon \rightarrow 0,$ and also $\forall l\in \mathbb{Z}%
_{+},~\left\vert m_{\varepsilon }(v_{\varepsilon })\right\vert _{0,\infty ,%
\mathbb{R} }=O(\varepsilon ^{l}),~\varepsilon \rightarrow 0.$ Therefore, $%
\forall q\in \mathbb{Z}_{+},~\left\vert f_{\varepsilon }\circ u_{\varepsilon
}-g_{\varepsilon }\circ v_{\varepsilon }\right\vert _{0,\infty ,\mathbb{R}
}=O(\varepsilon ^{q}),~\varepsilon \rightarrow 0.$

Let $\widetilde{u}=\left( \widetilde{u}_{aa}+\widetilde{u}_{cor}\right) \in
\mathcal{G}_{aaa}.$ As $\widetilde{F}\circ \widetilde{u}=\widetilde{F}\left(
\widetilde{u}_{aa}\right) +\left( \widetilde{F}\left( \widetilde{u}\right) -%
\widetilde{F}\left( \widetilde{u}_{aa}\right) \right) ,$ then $\widetilde{F}%
\circ \widetilde{u}=\widetilde{F}\left( \widetilde{u}_{aa}\right) +\left(
\widetilde{F}\left( \widetilde{u}_{aa}+\widetilde{u}_{cor}\right) -%
\widetilde{F}\left( \widetilde{u}_{aa}\right) \right) $ on $\mathbb{J}.$ In
view of (\cite{BKT-AAGF}, Proposition $9$) we obtain $\widetilde{F}\left(
\widetilde{u}_{aa}\right) \in \mathcal{G}_{aa}.$ \ It remains to prove that $%
\widetilde{F}\left( \widetilde{u}_{aa}+\widetilde{u}_{cor}\right) -%
\widetilde{F}\left( \widetilde{u}_{aa}\right) \in \mathcal{G}_{_{+,0}}.$
Since $\mathcal{G}_{aaa}$ and $\mathcal{G}_{aa}$ are contained in $\mathcal{G%
}_{\mathcal{B}}$ then $\widetilde{F}\left( \widetilde{u}_{aa}+\widetilde{u}%
_{cor}\right) -\widetilde{F}\left( \widetilde{u}_{aa}\right) \in \mathcal{G}%
_{\mathcal{B}}.$ It suffices to show that $\forall \varepsilon \in
I,f_{\varepsilon }\left( u_{aa,\varepsilon }+u_{cor,\varepsilon }\right)
-f_{\varepsilon }\left( u_{aa,\varepsilon }\right) \in \mathcal{B}_{+,0},$
where $(f_{\varepsilon })_{\varepsilon },(u_{aa,\varepsilon })_{\varepsilon
} $ and $(u_{cor},_{\varepsilon })_{\varepsilon }$ are respective
representatives of $\widetilde{F},\widetilde{u}_{aa}$ and $\widetilde{u}%
_{cor}.$ The classical result on composition of asymptotically almost
automorphic function with continuous function shows that the corrective term
of $\ f_{\varepsilon }\left( u_{aa,\varepsilon }+u_{cor,\varepsilon }\right)
$ is $f_{\varepsilon }\left( u_{aa,\varepsilon }+u_{cor,\varepsilon }\right)
-f_{\varepsilon }\left( u_{aa,\varepsilon }\right) $ and the fact that $%
\widetilde{F}\left( \widetilde{u}_{aa}+\widetilde{u}_{cor}\right) -%
\widetilde{F}\left( \widetilde{u}_{aa}\right) \in \mathcal{G}_{\mathcal{B}}$
gives $\forall \varepsilon \in I,$ $\left( f_{\varepsilon }\left(
u_{aa,\varepsilon }+u_{cor,\varepsilon }\right) -f_{\varepsilon }\left(
u_{aa,\varepsilon }\right) \right) \in \mathcal{B}.$ By (\cite{BT-AAAD},
Proposition $5-(5)$), we have $\forall \varepsilon \in I,f_{\varepsilon
}\left( u_{aa,\varepsilon }+u_{cor,\varepsilon }\right) -f_{\varepsilon
}\left( u_{aa,\varepsilon }\right) \in \mathcal{C}_{+,0}\cap \mathcal{B=B}%
_{+,0}.$
\end{proof}

\section{Linear neutral difference differential systems}

We consider linear neutral difference differential systems for the unknown
vector function $\widetilde{u}=\left( \widetilde{u}_{1},\ldots ,\widetilde{u}%
_{n}\right) ,$%
\begin{equation}
L_{\omega }\widetilde{u}:=\sum\limits_{i=0}^{p}\sum\limits_{j=0}^{q}%
\widetilde{A}_{ij}\left( \tau _{\omega _{j}}\widetilde{u}\right) ^{(i)}+%
\widetilde{K}\ast \widetilde{u}=\widetilde{f},  \label{equation}
\end{equation}%
where $\widetilde{f}=\left( \widetilde{f}_{1},\ldots ,\widetilde{f}%
_{n}\right) \in \left( \mathcal{G}_{aaa}\right) ^{n},\omega =\left( \omega
_{j}\right) _{0\leq j\leq q}\subset \mathbb{R} _{+}^{q}$ and for $i\leq
p,j\leq q,\widetilde{A}_{ij}=\left( \widetilde{A}_{ij}^{rl}\right) _{1\leq
r,l\leq n}$ and $\widetilde{K}=\left( \widetilde{K}^{rl}\right) _{1\leq
r,l\leq n}$ are square matrices of almost automorphic generalized functions
and $L^{1}-$generalized functions respectively.

\begin{lemma}
If $\widetilde{u}\in \left( \mathcal{G}_{aaa}\right) ^{n}$ then $L_{\omega }
\widetilde{u}\in \left( \mathcal{G}_{aaa}\right) ^{n}.$
\end{lemma}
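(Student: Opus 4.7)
The plan is to read off the conclusion directly from Proposition \ref{prop3.2}, since every operation appearing in $L_{\omega}$ is one of the stability properties of $\mathcal{G}_{aaa}$ that we have already established. More precisely, Proposition \ref{prop3.2}-$(1)$ gives stability of $\mathcal{G}_{aaa}$ under translations $\tau_{\omega_j}$ and derivations of any order, parts $(2)$ and $(3)$ give the crucial mixed-type rules $\mathcal{G}_{aaa}\times \mathcal{G}_{aa}\subset \mathcal{G}_{aaa}$ and $\mathcal{G}_{aaa}\ast \mathcal{G}_{L^{1}}\subset \mathcal{G}_{aaa}$, and the subalgebra structure of $\mathcal{G}_{aaa}\subset \mathcal{G}_{\mathcal{B}}$ from $(1)$ gives closure under finite sums. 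All that is then left is a bookkeeping argument for the matrix/vector formalism.

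I would proceed as follows. First, fix $\widetilde{u}=(\widetilde{u}_{1},\dots,\widetilde{u}_{n})\in(\mathcal{G}_{aaa})^{n}$ and examine a single component $r$ of $L_{\omega}\widetilde{u}$, which reads
\begin{equation*}
(L_{\omega}\widetilde{u})_{r}=\sum_{i=0}^{p}\sum_{j=0}^{q}\sum_{l=1}^{n}\widetilde{A}_{ij}^{rl}\bigl(\tau_{\omega_{j}}\widetilde{u}_{l}\bigr)^{(i)}+\sum_{l=1}^{n}\widetilde{K}^{rl}\ast \widetilde{u}_{l}.
\end{equation*}
For each fixed $i,j,l$, I would start from $\widetilde{u}_{l}\in \mathcal{G}_{aaa}$, apply Proposition \ref{prop3.2}-$(1)$ to see that $\bigl(\tau_{\omega_{j}}\widetilde{u}_{l}\bigr)^{(i)}\in \mathcal{G}_{aaa}$, and then invoke Proposition \ref{prop3.2}-$(2)$ with $\widetilde{A}_{ij}^{rl}\in \mathcal{G}_{aa}$ to conclude that the product $\widetilde{A}_{ij}^{rl}\bigl(\tau_{\omega_{j}}\widetilde{u}_{l}\bigr)^{(i)}$ stays in $\mathcal{G}_{aaa}$. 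For the convolution piece, the same $\widetilde{u}_{l}\in \mathcal{G}_{aaa}$ and $\widetilde{K}^{rl}\in \mathcal{G}_{L^{1}}$ combined with Proposition \ref{prop3.2}-$(3)$ give $\widetilde{K}^{rl}\ast \widetilde{u}_{l}\in \mathcal{G}_{aaa}$. A finite sum of elements of $\mathcal{G}_{aaa}$ remains in $\mathcal{G}_{aaa}$ by the algebra structure, so $(L_{\omega}\widetilde{u})_{r}\in \mathcal{G}_{aaa}$ for each $r$, hence $L_{\omega}\widetilde{u}\in (\mathcal{G}_{aaa})^{n}$.

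There is no genuine obstacle here: the lemma is essentially a packaging result whose sole purpose is to certify that the left-hand side of the neutral difference differential system (\ref{equation}) really lives in the space in which the forcing term $\widetilde{f}$ has been placed, so that the existence question studied in the rest of the section is well posed. The only point that merits a line of comment is that the well-definedness of each operation on representatives (independence of the choice of $(u_{\varepsilon})_{\varepsilon}$, $(A_{ij,\varepsilon}^{rl})_{\varepsilon}$, $(K_{\varepsilon}^{rl})_{\varepsilon}$) has already been verified inside the proof of Proposition \ref{prop3.2}, so no new null-ideal verification is required.
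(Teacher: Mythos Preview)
Your proposal is correct and follows exactly the same route as the paper: both arguments reduce the lemma to the stability properties of $\mathcal{G}_{aaa}$ established in Proposition \ref{prop3.2}, applying parts (1), (2), (3) to the translation/derivation, product with $\mathcal{G}_{aa}$, and convolution with $\mathcal{G}_{L^{1}}$ pieces, then using linearity. Your version simply spells out the componentwise bookkeeping more explicitly than the paper does.
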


\begin{proof}
If $\widetilde{u}\in \left( \mathcal{G}_{aaa}\right) ^{n}$ then due to
results of Proposition \ref{prop3.2} we obtain that $\forall i\leq p,j\leq
q,\forall \omega =\left( \omega _{j}\right) _{0\leq j\leq q}\subset \mathbb{R%
} _{+}^{q},$
\begin{equation*}
\left( \widetilde{A}_{ij}\left( \tau _{\omega _{j}}\widetilde{u}\right)
^{\left( i\right) }\right) \in \left( \mathcal{G}_{aaa}\right) ^{n},
\end{equation*}%
and also
\begin{equation*}
\widetilde{K}\ast \widetilde{u}\in \left( \mathcal{G}_{aaa}\right) ^{n},
\end{equation*}%
so
\begin{equation*}
\left( \sum\limits_{i=0}^{p}\sum\limits_{j=0}^{q}\widetilde{A}_{ij}\left(
\tau _{\omega _{j}}\widetilde{u}\right) ^{(i)}+\widetilde{K}\ast \widetilde{u%
}\right) \in \left( \mathcal{G}_{aaa}\right) ^{n},
\end{equation*}%
i.e. $L_{\omega }\widetilde{u}\in \left( \mathcal{G}_{aaa}\right) ^{n}.$
\end{proof}

\begin{definition}
A generalized function $\widetilde{u}\in \left( \mathcal{G}_{\mathcal{B}%
}\right) ^{n}$ is said a generalized solution of (\ref{equation}) on $%
\mathbb{J}$ if it satisfies
\begin{equation*}
\left(
\sum\limits_{l=1}^{n}\sum\limits_{i=1}^{p}\sum\limits_{j=1}^{q}A_{ij,%
\varepsilon }^{rl}\left( \tau _{\omega _{j}}u_{l,\varepsilon }\right)
^{\left( i\right) }+K_{\varepsilon }^{rl}\ast u_{l,\varepsilon
}-f_{r,\varepsilon }\right) _{\varepsilon }\in \mathcal{N}_{\mathcal{B}%
}\left( \mathbb{J}\right) ,r=1,...,n,
\end{equation*}%
where $\left( u_{l,\varepsilon }\right) _{\varepsilon },\left(
A_{ij,\varepsilon }^{rl}\right) _{\varepsilon },\left( K_{\varepsilon
}^{rl}\right) _{\varepsilon }$ and $\left( f_{r,\varepsilon }\right)
_{\varepsilon },r,l=1,...,n,$ are respective representatives of $\widetilde{u%
}_{r},\widetilde{A}_{ij}^{rl},\widetilde{K}^{rl}$ and $\widetilde{f}_{r}.$
\end{definition}

We give the main result of this section.

\begin{theorem}
\label{thm-eqt} Let $\widetilde{f}=\left( \widetilde{f}_{aa}+\widetilde{f}%
_{cor}\right) \in \left( \mathcal{G}_{aaa}\right) ^{n},$ the equation (\ref%
{equation}) admits a generalized solution $\widetilde{u}\in \left( \mathcal{G%
}_{aaa}\right) ^{n}$ on $\mathbb{J}$ if and only if there exist $\widetilde{v%
}\in \left( \mathcal{G}_{aa}\right) ^{n}$ and $\widetilde{w}\in \left(
\mathcal{G}_{+,0}\right) ^{n}$ such that
\begin{equation}
L_{\omega }\widetilde{v}=\widetilde{f}_{aa}\ \ \text{on}\ \ \mathbb{R},
\label{aa-eqt}
\end{equation}%
and
\begin{equation}
L_{\omega }\widetilde{w}=\widetilde{f}_{cor}\ \ \text{on}\ \ \mathbb{J}.
\label{cor-eqt}
\end{equation}
\end{theorem}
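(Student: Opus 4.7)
The plan is to prove the biconditional by two separate arguments: sufficiency, which is an immediate superposition argument, and necessity, which combines Theorem \ref{decomp-AAAGF} with the observation that $L_\omega$ separately stabilizes $\mathcal{G}_{aa}$ and $\mathcal{G}_{+,0}$.

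For sufficiency, given $\widetilde{v}\in(\mathcal{G}_{aa})^n$ solving \eqref{aa-eqt} on $\mathbb{R}$ and $\widetilde{w}\in(\mathcal{G}_{+,0})^n$ solving \eqref{cor-eqt} on $\mathbb{J}$, I would simply set $\widetilde{u}:=\widetilde{v}+\widetilde{w}$. Since $\widetilde{v}$ plays the role of principal term and $\widetilde{w}$ the role of corrective term, $\widetilde{u}\in(\mathcal{G}_{aaa})^n$. Linearity of $L_\omega$ gives, on $\mathbb{J}$,
\begin{equation*}
L_\omega\widetilde{u}=L_\omega\widetilde{v}+L_\omega\widetilde{w}=\widetilde{f}_{aa}+\widetilde{f}_{cor}=\widetilde{f}.
\end{equation*}

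For necessity, given $\widetilde{u}\in(\mathcal{G}_{aaa})^n$ with $L_\omega\widetilde{u}=\widetilde{f}$ on $\mathbb{J}$, I would apply Theorem \ref{decomp-AAAGF} componentwise to write $\widetilde{u}=\widetilde{u}_{aa}+\widetilde{u}_{cor}$ on $\mathbb{J}$ with $\widetilde{u}_{aa}\in(\mathcal{G}_{aa})^n$ and $\widetilde{u}_{cor}\in(\mathcal{G}_{+,0})^n$. The crucial auxiliary step is to check that $L_\omega$ maps $(\mathcal{G}_{aa})^n$ into itself and $(\mathcal{G}_{+,0})^n$ into itself. The first is the almost-automorphic analog of Proposition \ref{prop3.2} coming from \cite{BKT-AAGF}. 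The second reduces to checking at the level of representatives that $\tau_{\omega_j}$ with $\omega_j\geq 0$ and differentiation preserve $\mathcal{B}_{+,0}$, that the product $\widetilde{A}_{ij}\widetilde{w}$ of a bounded $\mathcal{G}_{aa}$-matrix by a $\mathcal{G}_{+,0}$-element is again in $\mathcal{G}_{+,0}$ (bounded times vanishing), and that the convolution $\widetilde{K}\ast\widetilde{w}$ of an $L^1$-kernel with a representative vanishing at $+\infty$ vanishes at $+\infty$ by dominated convergence; the moderateness estimates carry over as in Proposition \ref{prop3.2}. Thus $L_\omega\widetilde{u}_{aa}\in(\mathcal{G}_{aa})^n$ and $L_\omega\widetilde{u}_{cor}\in(\mathcal{G}_{+,0})^n$, and reading the equation on $\mathbb{J}$ through the decomposition produces
\begin{equation*}
\widetilde{f}_{aa}+\widetilde{f}_{cor}=\widetilde{f}=L_\omega\widetilde{u}=L_\omega\widetilde{u}_{aa}+L_\omega\widetilde{u}_{cor}\qquad\text{on }\mathbb{J}.
\end{equation*}
The uniqueness clause of Theorem \ref{decomp-AAAGF} then identifies principal and corrective terms on each side: $L_\omega\widetilde{u}_{aa}=\widetilde{f}_{aa}$ in $(\mathcal{G}_{aa})^n$, hence on $\mathbb{R}$, and $L_\omega\widetilde{u}_{cor}=\widetilde{f}_{cor}$ on $\mathbb{J}$. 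Setting $\widetilde{v}:=\widetilde{u}_{aa}$, $\widetilde{w}:=\widetilde{u}_{cor}$ closes the argument.

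The main technical obstacle is the identity $L_\omega\widetilde{u}=L_\omega\widetilde{u}_{aa}+L_\omega\widetilde{u}_{cor}$ on $\mathbb{J}$: the translation (with $\omega_j\geq 0$), multiplication and differentiation contributions to $L_\omega$ are compatible with restriction to $\mathbb{J}$ pointwise, but the convolution $\widetilde{K}\ast(\cdot)$ is non-local. One handles this by exploiting that $\widetilde{u}_{aa}$ and $\widetilde{u}_{cor}$ are produced as globally defined elements of $\mathcal{G}_{\mathcal{B}}(\mathbb{R})$ (the latter via the Seeley-type extension of Theorem \ref{THM2}), so the difference $\widetilde{u}-\widetilde{u}_{aa}-\widetilde{u}_{cor}$ is null on $\mathbb{J}$ at the representative level, and one then propagates this null estimate through the convolution when read on $\mathbb{J}$ by the uniform argument of Proposition \ref{prop3.2}-(3).
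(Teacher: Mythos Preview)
Your overall strategy is sound and in fact somewhat cleaner than the paper's: you invoke the uniqueness clause of Theorem~\ref{decomp-AAAGF} directly, whereas the paper re-runs the translation-by-subsequences argument (Proposition~9 of \cite{BT-AAAD}) at the representative level to separate the almost automorphic and corrective pieces. Since that argument is exactly what proved uniqueness in Theorem~\ref{decomp-AAAGF}, your route avoids duplication. Your checks that $L_\omega$ preserves $(\mathcal{G}_{aa})^n$ and $(\mathcal{G}_{+,0})^n$ are correct.

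There is, however, a genuine gap in your last paragraph. You correctly identify the obstacle: the decomposition $\widetilde{u}=\widetilde{u}_{aa}+\widetilde{u}_{cor}$ is asserted only on $\mathbb{J}$, while $\widetilde{K}\ast(\cdot)$ is non-local, so one cannot directly conclude $L_\omega\widetilde{u}=L_\omega\widetilde{u}_{aa}+L_\omega\widetilde{u}_{cor}$ on $\mathbb{J}$. But your proposed fix---propagating a null-on-$\mathbb{J}$ estimate through the convolution ``by the uniform argument of Proposition~\ref{prop3.2}-(3)''---does not work: for $x\in\mathbb{J}$ the integral $\int K_\varepsilon(y)\,n_\varepsilon(x-y)\,dy$ samples $n_\varepsilon$ on all of $\mathbb{R}$, and a null estimate on $\mathbb{J}$ alone gives no control on $(-\infty,0)$. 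Proposition~\ref{prop3.2}-(3) uses Young's inequality on $\mathbb{R}$, not on $\mathbb{J}$.

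The correct (and simpler) fix is to choose representatives so that the decomposition is exact on all of $\mathbb{R}$: given a representative $(u_{\varepsilon})_\varepsilon$ of $\widetilde{u}$ and the principal term $(u_{aa,\varepsilon})_\varepsilon$ produced in the proof of Theorem~\ref{decomp-AAAGF}, set $u_{cor,\varepsilon}:=u_{\varepsilon}-u_{aa,\varepsilon}$ on $\mathbb{R}$. Then $u_{cor,\varepsilon}\in\mathcal{B}$, it coincides on $\mathbb{J}$ with the corrective term and hence vanishes at $+\infty$, so by Proposition~5-(5) of \cite{BT-AAAD} one has $u_{cor,\varepsilon}\in\mathcal{B}_{+,0}(\mathbb{R})$; moderateness follows from that of $u_\varepsilon$ and $u_{aa,\varepsilon}$. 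With this choice $u_{\varepsilon}=u_{aa,\varepsilon}+u_{cor,\varepsilon}$ holds identically on $\mathbb{R}$, so $L_\omega$ applies termwise without any non-locality issue, and the rest of your argument goes through unchanged. (The paper's own proof glosses over this same point when passing to the estimate~\eqref{1-n}; the same choice of representative is the implicit justification there.)
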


\begin{proof}
If $\widetilde{u}=(\widetilde{u}_{aa}+\widetilde{u}_{cor})\in \left(
\mathcal{G}_{aaa}\right) ^{n}$ is a generalized solution of (\ref{equation}%
), then the equations (\ref{equation}) explicitly are written as follows

\begin{equation*}
\left\{
\begin{array}{l}
\sum\limits_{l=1}^{n}\left( \sum\limits_{i=0}^{p}\sum\limits_{j=0}^{q}%
\widetilde{A}_{ij}^{1l}\left( \tau _{\omega _{j}}\widetilde{u}_{l}\right)
^{(i)}+\widetilde{K}^{1l}\ast \widetilde{u}_{l}\right) =\widetilde{f}_{1}.
\\
\vdots \\
\sum\limits_{l=1}^{n}\left( \sum\limits_{i=0}^{p}\sum\limits_{j=0}^{q}%
\widetilde{A}_{ij}^{nl}\left( \tau _{\omega _{j}}\widetilde{u}_{l}\right)
^{(i)}+\widetilde{K}^{nl}\ast \widetilde{u}_{l}\right) =\widetilde{f}_{n}.%
\end{array}%
\right.
\end{equation*}%
Let $(u_{aa,l,\varepsilon })_{\varepsilon },(u_{cor,l,\varepsilon
})_{\varepsilon },\left( A_{ij,\varepsilon }^{rl}\right) _{\varepsilon
},\left( K_{\varepsilon }^{rl}\right) _{\varepsilon },$ $(f_{aa,l,%
\varepsilon })_{\varepsilon }$ and $(f_{cor,l,\varepsilon })_{\varepsilon }$
be respective representatives of $\widetilde{u}_{aa,l},$ $\ \widetilde{u}%
_{cor,l},\widetilde{A}_{ij}^{rl},\widetilde{K}^{rl},\widetilde{f}_{aa,l}$
and $\widetilde{f}_{cor,l},1\leq l\leq n.$ For every $r=1,\ldots ,n$ and $%
\varepsilon \in I,$ denoting
\begin{equation*}
S_{aa,r,\varepsilon }:=\sum\limits_{l=1}^{n}\left(
\sum\limits_{i=1}^{p}\sum\limits_{j=1}^{q}A_{ij,\varepsilon }^{rl}\left(
\tau _{\omega _{j}}u_{aa,l,\varepsilon }\right) ^{\left( i\right)
}+K_{\varepsilon }^{rl}\ast u_{aa,l,\varepsilon }\right) ,\text{ }
\end{equation*}%
\begin{equation*}
S_{cor,r,\varepsilon }:=\sum\limits_{l=1}^{n}\left(
\sum\limits_{i=1}^{p}\sum\limits_{j=1}^{q}A_{ij,\varepsilon }^{rl}\left(
\tau _{\omega _{j}}u_{cor,l,\varepsilon }\right) ^{\left( i\right)
}+K_{\varepsilon }^{rl}\ast u_{cor,l,\varepsilon }\right) ,
\end{equation*}%
we have $\forall \varepsilon \in I,S_{aa,r,\varepsilon }\in \mathcal{B}_{aa}$
and $S_{cor,r,\varepsilon }\in \mathcal{B}_{+,0},$\ and also $\left[ \left(
S_{aa,r,\varepsilon }\right) _{\varepsilon }\right] \in \mathcal{G}_{aa}$
and $\left[ \left( S_{cor,r,\varepsilon }\right) _{\varepsilon }\right] \in
\mathcal{G}_{+,0}.$ Since $\widetilde{u}$ is a generalized solution of (\ref%
{equation}) on $\mathbb{J},$ it means that $\forall k\in \mathbb{Z}%
_{+},\forall m>0,\exists c_{k}>0,\exists \varepsilon _{k}\in I,\forall
\varepsilon <\varepsilon _{k},\forall x\in \mathbb{J},$
\begin{equation}
\sum\limits_{j\leq k}\left\vert S_{aa,r,\varepsilon }^{(j)}\left( x\right)
-f_{aa,r,\varepsilon }^{(j)}\left( x\right) +S_{cor,r,\varepsilon
}^{(j)}\left( x\right) -f_{cor,r,\varepsilon }^{(j)}\left( x\right)
\right\vert \leq c_{k}\varepsilon ^{m},r=1,\ldots ,n.  \label{1-n}
\end{equation}%
For any real sequence $(s_{m})_{m\in \mathbb{N}},$ such that $%
s_{m}\rightarrow +\infty $ there exist $\left( s_{m_{p(\varepsilon
)}}\right) _{p}$ a subsequence of $(s_{m})_{m\in \mathbb{N}},$ such that
taking the translate at $s_{m_{p(\varepsilon )}}-s_{m_{q(\varepsilon )}}$ in
the estimate (\ref{1-n}) and let $p,q\rightarrow +\infty ,$ we obtain due to
(\cite{BT-AAAD}, Proposition $9$) that $\forall k\in \mathbb{Z}_{+},\forall
m>0,\exists c_{k}>0,\exists \varepsilon _{k}\in I,\forall \varepsilon
<\varepsilon _{k},\forall x\in \mathbb{J},$%
\begin{equation*}
\sum\limits_{j\leq k}\left\vert S_{aa,r,\varepsilon }^{(j)}\left( x\right)
-f_{aa,r,\varepsilon }^{(j)}\left( x\right) \right\vert \leq
c_{k}\varepsilon ^{m},\forall r=1,\ldots ,n,
\end{equation*}%
consequently, by Proposition $3-(5)$ of \cite{BT-AAAD}, \ it holds $\forall
k\in \mathbb{Z}_{+},\forall m>0,\exists c_{k}>0,\exists \varepsilon _{k}\in
I,\forall \varepsilon <\varepsilon _{k},$%
\begin{equation*}
\left\vert S_{aa,r,\varepsilon }-f_{aa,r,\varepsilon }\right\vert _{k,\infty
,\mathbb{R} }\leq c_{k}\varepsilon ^{m},\forall r=1,\ldots ,n,
\end{equation*}%
i.e.
\begin{equation}
\left( S_{aa,r,\varepsilon }-f_{aa,r,\varepsilon }\right) _{\varepsilon }\in
\mathcal{N}_{\mathcal{B}}\left( \mathbb{R} \right) ,\forall r=1,\ldots ,n,
\label{2-n}
\end{equation}%
which means that $\widetilde{u}_{aa}=\left( \widetilde{u}_{aa,1},\ldots ,%
\widetilde{u}_{aa,n}\right) $ is a generalized solution of (\ref{aa-eqt}) on
$\mathbb{R} .$ By (\ref{1-n}) and (\ref{2-n}), we deduce
\begin{equation*}
\left( S_{cor,r,\varepsilon }-f_{cor,r,\varepsilon }\right) _{\varepsilon
}\in \mathcal{N}_{\mathcal{B}}\left( \mathbb{J}\right) ,\forall r=1,\ldots
,n,
\end{equation*}%
i.e. $\widetilde{u}_{cor}=\left( \widetilde{u}_{cor,1},\ldots ,\widetilde{u}%
_{cor,n}\right) $ is a generalized solution of (\ref{cor-eqt}) on $\mathbb{J}%
.$

Conversely if there exist $\widetilde{v}\in \left( \mathcal{G}_{aa}\right)
^{n}$ and $\widetilde{w}\in \left( \mathcal{G}_{+,0}\right) ^{n}$ such that (%
\ref{aa-eqt}) and (\ref{cor-eqt}) hold, then we have $\widetilde{u}:=(%
\widetilde{v}+\widetilde{w})\in \left( \mathcal{G}_{aaa}\right) ^{n}$ is a
generalized solution of (\ref{equation}) on $\mathbb{J}.$
\end{proof}

\begin{remark}
Theorem \ref{thm-eqt} generalizes Theorem $6$\ of \cite{BK-AAPGFDE} and
Theorem $3$ of \cite{BT-AAAD}.
\end{remark}

As a particular case we consider linear systems of ordinary differential
equations
\begin{equation}
\overset{\cdot }{\widetilde{u}}+\widetilde{A}\widetilde{u}=\widetilde{f},
\label{LSE}
\end{equation}%
where $\widetilde{A}$ is a square matrix of almost automorphic generalized
functions.

\begin{corollary}
\label{cor-app} Let $\widetilde{f}=\left( \widetilde{f}_{aa}+\widetilde{f}%
_{cor}\right) \in \left( \mathcal{G}_{aaa}\right) ^{n},$ the system (\ref%
{LSE}) admits a generalized solution $\widetilde{u}\in \left( \mathcal{G}%
_{aaa}\right) ^{n}$ on $\mathbb{\ J}$ if and only if there exist $\widetilde{%
v}\in \left( \mathcal{G}_{aa}\right) ^{n}$ and $\widetilde{w}\in \left(
\mathcal{G}_{+,0}\right) ^{n}$ such that
\begin{equation*}
\overset{\cdot }{\widetilde{v}}+\widetilde{A}\widetilde{v}=\widetilde{f}_{aa}%
\text{ on }\mathbb{R},
\end{equation*}%
and
\begin{equation*}
\overset{\cdot }{\widetilde{w}}+\widetilde{A}\widetilde{w}=\widetilde{f}%
_{cor}\text{ on }\mathbb{J}.
\end{equation*}

Let $\widetilde{u}=\left[ \left( u_{\varepsilon }\right) _{\varepsilon }%
\right] \in \mathcal{G}_{\mathcal{B}},x_{0}\in \mathbb{R} $ and define the
primitive of $\widetilde{u}$ by $\widetilde{U}=\left[ \left( U_{\varepsilon
}\right) _{\varepsilon }\right] ,$ where%
\begin{equation*}
U_{\varepsilon }(x):=\int\limits_{x_{0}}^{x}u_{\varepsilon
}(t)dt,\varepsilon \in I.
\end{equation*}
\end{corollary}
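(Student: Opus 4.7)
The plan is to recognize that system (\ref{LSE}) is a direct specialization of the general neutral difference differential system (\ref{equation}) and then to invoke Theorem \ref{thm-eqt} verbatim.

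First, I would identify the parameters of (\ref{equation}) needed to recover (\ref{LSE}). Take $p=1$, $q=0$, and $\omega_{0}=0$; set $\widetilde{A}_{10}$ equal to the identity matrix (whose entries are constants and therefore lie in $\mathcal{G}_{aa}$), $\widetilde{A}_{00}=\widetilde{A}$, and $\widetilde{K}=0$. With these choices, the operator $L_{\omega }$ collapses to
\begin{equation*}
L_{\omega }\widetilde{u} \;=\; \widetilde{u}^{(1)} + \widetilde{A}\widetilde{u} \;=\; \overset{\cdot }{\widetilde{u}} + \widetilde{A}\widetilde{u},
\end{equation*}
so that equation (\ref{equation}) becomes precisely (\ref{LSE}), and the hypotheses on the coefficient matrices and convolution kernel required by Theorem \ref{thm-eqt} are trivially satisfied.

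Next, I would apply Theorem \ref{thm-eqt} to the resulting special case. That theorem yields that (\ref{LSE}) admits a generalized solution $\widetilde{u}\in (\mathcal{G}_{aaa})^{n}$ on $\mathbb{J}$ if and only if there exist $\widetilde{v}\in (\mathcal{G}_{aa})^{n}$ and $\widetilde{w}\in (\mathcal{G}_{+,0})^{n}$ with $L_{\omega }\widetilde{v}=\widetilde{f}_{aa}$ on $\mathbb{R}$ and $L_{\omega }\widetilde{w}=\widetilde{f}_{cor}$ on $\mathbb{J}$. Unfolding the definition of $L_{\omega }$ under our parameter choices converts these two identities into the stated equations $\overset{\cdot }{\widetilde{v}}+\widetilde{A}\widetilde{v}=\widetilde{f}_{aa}$ on $\mathbb{R}$ and $\overset{\cdot }{\widetilde{w}}+\widetilde{A}\widetilde{w}=\widetilde{f}_{cor}$ on $\mathbb{J}$, which is exactly the conclusion of the corollary.

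There is no real obstacle in this argument: the entire content is the recognition that (\ref{LSE}) is a specialization of (\ref{equation}). The only minor point to verify is that the identity matrix sits inside the class of almost automorphic generalized coefficient matrices and that the vanishing convolution kernel contributes nothing to $L_{\omega }$, both of which are immediate from the definitions.
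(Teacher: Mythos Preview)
Your proposal is correct and matches the paper's approach: the corollary is presented there explicitly as ``a particular case'' of Theorem~\ref{thm-eqt}, with no separate proof given, so your identification of the parameters $p=1$, $q=0$, $\omega_{0}=0$, $\widetilde{A}_{10}=\mathrm{Id}$, $\widetilde{A}_{00}=\widetilde{A}$, $\widetilde{K}=0$ and the direct invocation of Theorem~\ref{thm-eqt} is exactly what the authors intend.
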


\begin{corollary}
A generalized function $\widetilde{U}=\left( \widetilde{U}_{aa}+\widetilde{U}%
_{cor}\right) \in \mathcal{G}_{\mathcal{B}}$ is a primitive of $\widetilde{u}%
=(\widetilde{u}_{aa}+\widetilde{u}_{cor})\in \mathcal{G}_{aaa}$ on $\mathbb{J%
}$ if and only if
\begin{equation*}
\widetilde{U}_{aa}\ \ \text{is a primtive of}\ \ \widetilde{u}_{aa}\ \ \text{
on}\ \ \mathbb{R},
\end{equation*}%
and
\begin{equation*}
\widetilde{U}_{cor}\ \ \text{is a primitive of}\ \ \widetilde{u}_{cor}%
\newline
\ \text{on}\ \ \mathbb{J}.
\end{equation*}
\end{corollary}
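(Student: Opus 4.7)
The strategy is to recognize the defining relation $\dot{\widetilde{U}}=\widetilde{u}$ of a primitive as the particular case of system (\ref{LSE}) with $n=1$, $\widetilde{A}=0$, and right-hand side $\widetilde{f}=\widetilde{u}$. Under this identification the corollary becomes a direct specialization of Corollary \ref{cor-app}, combined with the uniqueness of the decomposition of an asymptotically almost automorphic generalized function.

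First I would invoke Corollary \ref{cor-app} with the data $n=1$, $\widetilde{A}=0$ and $\widetilde{f}=\widetilde{u}=\widetilde{u}_{aa}+\widetilde{u}_{cor}\in \mathcal{G}_{aaa}$. The corollary then yields the equivalence between $\widetilde{U}$ being a generalized solution of $\dot{\widetilde{U}}=\widetilde{u}$ on $\mathbb{J}$ (i.e., a primitive of $\widetilde{u}$ on $\mathbb{J}$) and the existence of $\widetilde{v}\in \mathcal{G}_{aa}$ and $\widetilde{w}\in \mathcal{G}_{+,0}$ with $\dot{\widetilde{v}}=\widetilde{u}_{aa}$ on $\mathbb{R}$ and $\dot{\widetilde{w}}=\widetilde{u}_{cor}$ on $\mathbb{J}$, that is, with $\widetilde{v}$ a primitive of $\widetilde{u}_{aa}$ on $\mathbb{R}$ and $\widetilde{w}$ a primitive of $\widetilde{u}_{cor}$ on $\mathbb{J}$.

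Next I would identify the pair $(\widetilde{v},\widetilde{w})$ produced by the corollary with the pair $(\widetilde{U}_{aa},\widetilde{U}_{cor})$ given in the statement, via Theorem \ref{decomp-AAAGF}. By hypothesis $\widetilde{U}=\widetilde{U}_{aa}+\widetilde{U}_{cor}$ on $\mathbb{J}$ with $\widetilde{U}_{aa}\in \mathcal{G}_{aa}$ and $\widetilde{U}_{cor}\in \mathcal{G}_{+,0}$, so $\widetilde{U}\in \mathcal{G}_{aaa}$. Once Corollary \ref{cor-app} supplies a second decomposition $\widetilde{U}=\widetilde{v}+\widetilde{w}$ on $\mathbb{J}$ of the same type, the uniqueness clause of Theorem \ref{decomp-AAAGF} forces $\widetilde{v}=\widetilde{U}_{aa}$ and $\widetilde{w}=\widetilde{U}_{cor}$. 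Conversely, if $\widetilde{U}_{aa}$ is a primitive of $\widetilde{u}_{aa}$ on $\mathbb{R}$ and $\widetilde{U}_{cor}$ is a primitive of $\widetilde{u}_{cor}$ on $\mathbb{J}$, then differentiating the decomposition on $\mathbb{J}$ produces $\dot{\widetilde{U}}=\dot{\widetilde{U}}_{aa}+\dot{\widetilde{U}}_{cor}=\widetilde{u}_{aa}+\widetilde{u}_{cor}=\widetilde{u}$ on $\mathbb{J}$, so $\widetilde{U}$ is a primitive of $\widetilde{u}$ on $\mathbb{J}$.

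The only potentially delicate point I foresee is the inherent indeterminacy of a primitive up to an additive constant, which a priori complicates the identification of $\widetilde{v}$ and $\widetilde{w}$ with $\widetilde{U}_{aa}$ and $\widetilde{U}_{cor}$. This is absorbed cleanly by the uniqueness statement of Theorem \ref{decomp-AAAGF} applied to $\widetilde{U}\in \mathcal{G}_{aaa}$, so no substantial obstacle arises; the result is essentially a reformulation of Corollary \ref{cor-app} in the degenerate case $\widetilde{A}=0$.
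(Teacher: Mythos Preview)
Your approach is essentially what the paper intends: the corollary is stated without proof as an immediate specialization of Corollary~\ref{cor-app} (equivalently Theorem~\ref{thm-eqt}) to the case $n=1$, $\widetilde{A}=0$.

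One small point deserves attention. As stated, Corollary~\ref{cor-app} only asserts the \emph{existence} of some $\widetilde{v}\in\mathcal{G}_{aa}$ and $\widetilde{w}\in\mathcal{G}_{+,0}$ solving the split equations; it does not assert that $\widetilde{U}=\widetilde{v}+\widetilde{w}$ on $\mathbb{J}$, so your sentence ``Corollary~\ref{cor-app} supplies a second decomposition $\widetilde{U}=\widetilde{v}+\widetilde{w}$'' is not yet justified, and the subsequent appeal to uniqueness of the decomposition of $\widetilde{U}$ does not get off the ground. The cleanest repair is to bypass Corollary~\ref{cor-app} in the forward direction and argue directly: differentiating $\widetilde{U}=\widetilde{U}_{aa}+\widetilde{U}_{cor}$ on $\mathbb{J}$ gives $\widetilde{u}=\dot{\widetilde{U}}_{aa}+\dot{\widetilde{U}}_{cor}$ on $\mathbb{J}$ with $\dot{\widetilde{U}}_{aa}\in\mathcal{G}_{aa}$ and $\dot{\widetilde{U}}_{cor}\in\mathcal{G}_{+,0}$ (stability under derivation), and then the uniqueness clause of Theorem~\ref{decomp-AAAGF} applied to $\widetilde{u}$ (rather than to $\widetilde{U}$) yields $\dot{\widetilde{U}}_{aa}=\widetilde{u}_{aa}$ on $\mathbb{R}$ and $\dot{\widetilde{U}}_{cor}=\widetilde{u}_{cor}$ on $\mathbb{J}$. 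Alternatively, you may invoke the \emph{proof} of Theorem~\ref{thm-eqt}, where the pair $(\widetilde{v},\widetilde{w})$ is explicitly taken to be the principal and corrective terms of the given solution, making your uniqueness step superfluous.
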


\end{document}